\PassOptionsToPackage{dvipsnames}{xcolor}
\documentclass[onefignum,onetabnum]{siamart171218}
\usepackage{tikz}
\usetikzlibrary{patterns}
\usepackage[dvipsnames]{xcolor}
\usepackage[textwidth=15cm]{geometry}
\usepackage{subcaption}
\usepackage{caption}



\usepackage{lipsum}
\usepackage{amsfonts}
\usepackage{graphicx}
\usepackage{epstopdf}
\usepackage{algorithmic}
\ifpdf
  \DeclareGraphicsExtensions{.eps,.pdf,.png,.jpg}
\else
  \DeclareGraphicsExtensions{.eps}
\fi


\newsiamremark{hypothesis}{Hypothesis}
\crefname{hypothesis}{Hypothesis}{Hypotheses}
\newsiamthm{claim}{Claim}

\headers{Adaptive randomized pivoting for column subset selection}{A. Cortinovis and D. Kressner}

\title{Adaptive randomized pivoting for column subset selection, \\ DEIM, and low-rank approximation
}

\author{Alice Cortinovis\thanks{Department of Computer Science, University of Pisa, Pisa, Italy\rev{, and member of the INdAM/GNCS research group}  (\email{alice.cortinovis@unipi.it})}
\and Daniel Kressner\thanks{Institute of Mathematics, EPFL Lausanne, Lausanne, Switzerland (\email{daniel.kressner@epfl.ch})}.}

\usepackage{amsopn}

\makeatletter
\newcommand*{\addFileDependency}[1]{
  \typeout{(#1)}
  \@addtofilelist{#1}
  \IfFileExists{#1}{}{\typeout{No file #1.}}
}
\makeatother

\newcommand*{\myexternaldocument}[1]{%
    \externaldocument{#1}%
    \addFileDependency{#1.tex}%
    \addFileDependency{#1.aux}%
}

\newcommand{\R}{\mathbb{R}}
\newcommand{\Vopt}{V_{\mathsf{opt}}}

\newcommand{\Id}{\mathsf{Id}}

\newtheorem{example}[theorem]{Example}

\renewcommand{\epsilon}{\varepsilon}

\ifpdf
\hypersetup{
  pdftitle={Adaptive randomized pivoting for column subset selection},
  pdfauthor={}
}
\fi

\newcommand{\rev}[1]{#1}


\myexternaldocument{ex_supplement}


\begin{document}

\maketitle

\begin{abstract}
We derive a new adaptive leverage score sampling strategy for solving the 
Column Subset Selection Problem (CSSP). The resulting algorithm, called Adaptive Randomized Pivoting, can be viewed as a randomization of Osinsky's recently proposed deterministic algorithm for CSSP.
It guarantees, in expectation, an approximation error that matches the optimal existence result in the Frobenius norm. Although the same guarantee can be achieved with volume sampling, our sampling strategy is much simpler and less expensive.
To show the versatility of Adaptive Randomized Pivoting, we apply it to select indices in the Discrete Empirical Interpolation Method, in cross/skeleton approximation of general matrices, and in the Nystr\"om approximation of symmetric positive semi-definite matrices. In all these cases, the resulting randomized algorithms are new and they enjoy bounds on the expected error that match -- or improve -- the best known deterministic results. A derandomization of the algorithm for the Nystr\"om approximation results in a new deterministic algorithm with a rather favorable error bound.
\end{abstract}

\begin{keywords}
  Randomized algorithms, column subset selection, DEIM, cross approximation, Nystr\"om approximation.
\end{keywords}

\begin{AMS}
65F55, 68W20  
\end{AMS}

\section{Introduction}

The Column Subset Selection Problem (CSSP) is a classical linear algebra problem that connects to a variety of fields, including theoretical computer science and statistical learning. Given an $m\times n$ matrix $A$, CSSP aims at selecting $r \ll \min\{m,n\}$ column indices $J = (j_1,\ldots, j_r)$ such that the {span of the} corresponding columns $A(:,J)$ approximately {contains \emph{every} column} of $A$. Letting $\Pi_J$ denote the orthogonal projector onto the column space of $A(:,J)$, a common way to measure the approximation error is to consider
\begin{equation} \label{eq:measure}
 \| A - \Pi_J A\|_F,
\end{equation}
where $\|\cdot \|_F$ denotes the Frobenius norm of a matrix. CSSP is closely connected to a multitude of other matrix approximations and factorizations, including CUR decompositions~\cite{Mahoney2009}, interpolative decompositions~\cite{Voronin2017}, (pseudo-)skeleton approximation~\cite{Goreinov1997}, adaptive cross approximation~\cite{Bebendorf2000}, Gausian elimination with pivoting~\cite{Townsend2013}, pivoted/rank-revealing QR decompositions~\cite{Gu1996}, pivoted Cholesky decompositions~\cite{Harbrecht2012}, Nyström approximation~\cite{Williams2000}, and the discrete empirical interpolation method (DEIM)~\cite{Barraul2004,Chaturantabut2010}.
In this work, we present a simple randomized method for solving CSSP. Our method satisfies an error bound that is optimal in expectation. Its principle design carries over to several of the matrix approximations and factorizations mentioned above, resulting in new methods with favorable error bounds. As we will repeatedly point out, our work has been inspired by Osinsky's remarkable work~\cite{Osinsky2023} on CSSP and related problems. Indeed, we show that the methods from~\cite{Osinsky2023} can be recovered from our methods through derandomization. 

Because the column space of $\Pi_J A$ has dimension at most $r$, the approximation error~\eqref{eq:measure} cannot be smaller than the best rank-$r$ approximation error:
\begin{equation} \label{eq:bestapproximation}
 \|A -  A \Vopt \Vopt^T\|^2_F = \big(\sigma_{r+1}^2(A) + \ldots + \sigma_n^2(A) \big) \le \| A - \Pi_J A\|^2_F,
\end{equation}
where $\sigma_1(A)\ge \sigma_2(A) \ge \ldots \ge \sigma_n(A) \ge 0$ denote the singular values of $A \in \R^{m\times n}$ and we assume, to simplify the presentation, that $m \ge n$. The matrix $\Vopt \in \R^{n \times r}$ represents {any} orthonormal basis {containing right singular vectors belonging to $r$ largest singular values of} $A$, and $A \Vopt \Vopt^T$ is one way of expressing {a} best rank-$r$ approximation of $A$. Of course, this approximation does not solve CSSP because $A \Vopt$ contains linear combinations of \emph{all} columns of $A$. Somewhat surprisingly, it turns out that there is always a column index set $J$ with an approximation error not much larger than the lower bound~\eqref{eq:bestapproximation}. Specifically, Deshpande et al.~\cite{Deshpande2006} showed via volume sampling that there exists $J$ such that
\begin{equation} \label{eq:quasioptCSSP}
  \| A - \Pi_J A\|_F^2 \le (r+1) \big( \sigma_{r+1}^2(A) + \ldots + \sigma_n^2(A) \big),
\end{equation}
and {an example constructed in~\cite[Prop. 3.3]{Deshpande2006} shows} that the factor $r+1$ is tight.
Through derandomization, Deshpande and Rade\-macher~\cite{Deshpande2010} developed a polynomial time, deterministic algorithm for computing a selection $J$ satisfying~\eqref{eq:quasioptCSSP}. This result is quite impressive, in view of the following two facts: (1) A greedy selection of columns, which corresponds to QR with column pivoting~\cite[Sec. 5.4.2]{GolubVanLoan2013} and orthogonal matching pursuit, results in a factor that grows exponentially with $r$~\cite{Gu1996}. (2) 
Finding the optimal $J$ is an NP complete problem~\cite{Shitov2021}. However, the algorithm by Deshpande and Rademacher comes with the disadvantage that its numerical implementation is challenging~\cite{Cortinovis2020} due the need for computing coefficients of (high-degree) characteristic polynomials.

Osinsky~\cite{Osinsky2023} developed a much simpler algorithm for determining $J$ satisfying~\eqref{eq:quasioptCSSP}. In a nutshell, the algorithm proceeds as follows. 
Given an arbitrary orthonormal basis $V \in \R^{n\times r}$, it
progressively turns the orthogonal projection
$
 A V V^T
$
of the rows of $A$ onto $\mathrm{span}(V)$
into an oblique projection $A \widetilde \Pi_J = A(:,J) V(J,:)^{-T} V^T$, which involves a selection of $r$ columns $A(:,J)$ instead of $A V$.
To control the increase of the projection error from $\|A - A V V^T \|_F^2$ to $\|A - A \widetilde \Pi_J\|_F^2$, 
each step of Osinsky's algorithm selects the column index that leads to the smallest increase. As a result,
the total error satisfies
\begin{equation} \label{eq:Osinskyresult}
  \| A - \Pi_J A\|_F^2 \le \|A - A \widetilde \Pi_J\|_F^2 \le (r+1) \|A -  A V V^T\|^2_F.
\end{equation}
Because of~\eqref{eq:bestapproximation}, this matches the tight bound~\eqref{eq:quasioptCSSP} for $V = \Vopt$. Allowing for general $V$ instead of $\Vopt$ offers the flexibility of using a cheap method for obtaining a row space approximation by, e.g., sketching $A^T$ or sampling. In these cases, the need for multiplying $A$ with $V$ in order to compute and control projection errors is a potential  disadvantage of Osinsky's method.

Adaptive Randomized Pivoting, the method proposed in this work, determines the index set $J$ by adaptive leverage score sampling. Each step of the method chooses a column index randomly with probability proportional to the squared row norms of $V$. Before proceeding to the next step, the row of $V$ corresponding to the selected index is removed by an orthogonal projection. This changes the row norms of $V$ and makes the sampling adaptive. After $r$ steps, the selection of $J$ is completed and the columns of $A$ in $J$ are evaluated, but otherwise the matrix $A$ is not directly involved in the process. In particular, the computation of $AV$ is avoided. Our main theoretical result, Theorem~\ref{thm:mainthm}, shows that the inequality~\eqref{eq:Osinskyresult} is satisfied in expectation:
\begin{equation} \label{eq:mainresultintro}
 \mathbb E \| A - \Pi_J A\|_F^2 \le \mathbb E \|A - A \widetilde \Pi_J\|_F^2 = (r+1) \|A -  A V V^T\|^2_F.
\end{equation}
For $V = \Vopt$, this matches a result~\cite[Theorem 1.3]{Deshpande2006} on volume sampling, except that our adaptive sampling strategy is much simpler and cheaper than volume sampling.

Ideas related to Adaptive Randomized Pivoting have been presented several times in the literature; see~\cite{DongMartinsson2023} and the references therein. For example, the subspace sampling method by Drineas et al.~\cite{Drineas2006} samples the columns of $A$ according to the (approximate) row norms of $\Vopt$ without updating the sampling probabilities. Sampling $\mathcal O(\epsilon^{-2} r \log r)$ columns suffices to attain the error bound $(1+\epsilon)^2 \cdot \|A -  A \Vopt \Vopt^T\|^2_F$ with high probability (whp). An improvement of subspace sampling has been presented by Boutsidis et al.~\cite{Boutsidis2009}, which first samples $\mathcal O(r \log r )$ columns according to a (non-adaptive) probability distribution that involves the row norms of $\Vopt$ and the row norms of the projection error. It then deterministically subselects exactly $r$ columns. The output of the algorithm satisfies the error bound $\mathcal O(r^2 \log r) \cdot  \|A -  A \Vopt \Vopt^T\|^2_F$ whp. 
Adaptive sampling strategies for CSSP have been introduced in~\cite{Chen2023,Deshpande2006}, which sample in each step one or more columns from a  probability distribution that is determined by the column norms of $A$ with the previously selected columns removed by orthogonal projection. The resulting approximations satisfy error bounds that rely on oversampling, especially if the best rank-$r$ approximation error is small. Adaptive Randomized Pivoting combines adaptive sampling with subspace sampling. Although we are not aware of it, it is difficult to exclude that such a combination has never been mentioned before in the large literature on CSSP and related problems. At least, such an approach is not mentioned in the recent literature~\cite{Belhadji2020,Chen2023} on randomized methods for CSSP and the result~\eqref{eq:mainresultintro} appears to be unknown.

The rest of this paper is organized as follows. In Section~\ref{sec:cssp}, we describe Adaptive Randomized Pivoting for CSSP and prove the error bound~\eqref{eq:mainresultintro}. In the subsequent sections, we demonstrate how this index sampling strategy can be applied to other matrix approximations and factorizations. Because Adaptive Randomized Pivoting is oblivious to $A$, it is a natural choice for DEIM, which will be discussed in Section~\ref{sec:DEIM}. The resulting randomized algorithm for DEIM enjoys an error bound (in expectation) that is significantly better than error bounds for existing DEIM index selection strategies.
In Section~\ref{sec:crossapproximation}, we apply Adaptive Randomized Pivoting to compute a cross approximation (also called skeleton approximation)
\begin{equation} \label{eq:crossapproximation}
 A \approx A(:,J)A(I,J)^{-1}A(I,:),
\end{equation}
where each of the index sets $I,J$ has cardinality $r$.
In expectation, the squared Frobenius norm error returned by our method is bounded by $(r+1)^2\cdot \|A -  A V V^T\|^2_F$. In Section~\ref{sec:spsd}, we consider the case when $A$ is symmetric positive semi-definite (SPSD), for which~\eqref{eq:crossapproximation} with $I = J$ is sometimes called 
pivoted Cholesky decomposition or Nyström approximation. A trick coined Gram correspondence~\cite{EpperlyBlog} relates the SPSD case to 
CSSP and allows us to conveniently analyze the error when $I$ is obtained by applying Adaptive Randomized Pivoting to $V$. When $V = \Vopt$, Corollary~\ref{cor:SPSD} implies that the trace norm error is bounded in expectation by the best rank-$r$ approximation error multiplied by $r+1$. This is often more favorable than the error bound presented in~\cite{Chen2023} for RPCholesky, an adaptive sampling strategy based on the diagonal of the progressively updated matrix $A$.
A suitable derandomization of the randomized methods presented in Section~\ref{sec:cssp},~\ref{sec:DEIM}, and~\ref{sec:crossapproximation} recovers methods presented by Osinsky~\cite{Osinsky2023}. These methods satisfy the error bounds deterministically but they also require additional access to the data when building the index sets. This limitation is particularly relevant for DEIM, where the data to be approximated is usually not known in advance.
In the SPSD case, the derandomized method presented in Section~\ref{sec:SPSDderandomized} appears to be new. The purpose of the \emph{preliminary} numerical experiments included in Section~\ref{sec:experiments} is mainly to validate our methods and illustrate their potential. The fine-tuning and efficient implementation of our algorithms merit a separate discussion, beyond the scope of this work.

\section{The Column Subset Selection Problem} \label{sec:cssp}
This section considers CSSP, that is, finding a column index set $J$ of cardinality $r$ such that~\eqref{eq:measure} is small. Our proposed algorithm is presented in Section~\ref{sec:ARP}. In Section~\ref{sec:analysis} we show that the algorithm gives, in expectation, a quasi-optimal set of indices, and in Section~\ref{sec:derandomizedCSSP} we discuss how the deterministic algorithm from~\cite{Osinsky2023} can be derived from a derandomization of our proposed algorithm.

\subsection{Adaptive Randomized Pivoting}\label{sec:ARP}

Our algorithm for CSSP assumes the availability of 
an orthonormal basis $V \in \R^{n \times r}$ that represents a good approximation to the row space of $A$, in the sense that 
\begin{equation} \label{eq:rowspaceapprox}
  \|A - A VV^T\|_F \approx 0.
\end{equation}
When $V = \Vopt$ (that is, $V$ contains {right singular vectors belonging to $r$ largest singular values} of $A$), the minimal error~\eqref{eq:bestapproximation} is attained.

To determine the first index $j_1$, our algorithm performs leverage score sampling, that is, $j_1$ is chosen randomly according to the row norms of $V$:
\begin{equation} \label{eq:leveragesampling}
 \mathbb P\{ j_1 = j \} = p_j := \|V(j,:)\|_2^2 / \|V\|_F^2, \quad j = 1,\ldots, n.
\end{equation}
Before proceeding to the next index, the selected row $V(j_1,:)$ is removed by orthogonal projection:
\begin{equation} \label{eq:orthprojection}
 V_{{1}} \gets V (\Id-\Pi_{V(j_1,:)}) = V - V V(j_1,:)^\dagger V(j_1,:),
\end{equation}
where $\Id$ denotes the identity matrix (of suitable size) and $\dagger$ denotes the pseudoinverse of a matrix.
The second index $j_2$ is randomly chosen according to~\eqref{eq:leveragesampling}, with the sampling probabilities adapted by using the row norms of the updated $V_{{1}}$ from~\eqref{eq:orthprojection}. Then the selected row $V_{{1}}(j_2,:)$ is removed by orthogonal projection, {obtaining $V_2 \gets V_1 (\Id-\Pi_{V_1(j_2,:)}) = V_1 - V_1 V_1(j_2,:)^\dagger V_1(j_2,:)$,} and so on. After $r$ steps of the described procedure, all indices $j_1,\ldots,j_r$ have been determined (and $V_{{r}}$ has been reduced to zero).

\begin{algorithm}
\textbf{Input: } Matrix $V \in \R^{n \times r}$ with orthonormal columns defining a row space approximation~\eqref{eq:rowspaceapprox} 

\textbf{Output: }Indices $J=(j_1, \ldots, j_r)$  defining a column subset selection

    \begin{algorithmic}[1]
    \STATE{{Initialize $V_0 = V$ and $J_0 = ()$}}
    \FOR{$k = 1,\ldots,r$}
        \STATE{Set $p_j = \|V_{k-1}(j,{:})\|_2^2/(r-k+1)$ for $j = 1,\ldots,n$}\label{line:probnew}
        \STATE{Sample index $j_k$ according to probabilities $p_j$}
        \STATE{{Set $J_k \leftarrow (J_{k-1}, j_k)$}} 
        \STATE{Update $V_k \leftarrow {V_{k-1} \left ( \Id - V_{k-1}(j_k,:)^\dagger V_{k-1} (j_k,:) \right )}$ 
        }\label{line:projection}
    \ENDFOR
    \STATE{{Set $J \leftarrow J_r$}}
    \end{algorithmic}
    \caption{{Adaptive Randomized Pivoting for CSSP (ARP -- prototype algorithm)}}
    \label{alg:ARPnew}
\end{algorithm}

{Let us define the matrices
\[
 {E_{J_k} = \begin{bmatrix} e_{j_1} & e_{j_2} & \cdots & e_{j_k} \end{bmatrix} \in \mathbb \R^{n\times k},}
\]
where $e_j$ denotes the $j$th unit vector, for $k = 1,\ldots,r$. 
The following lemma tells us how to write $V_k$ in a more compact form that will be useful for the analysis.
\begin{lemma}\label{lem:propV}
    For the matrices generated by Algorithm~\ref{alg:ARPnew} we have:
    \begin{equation}\label{eq:Vk}
        V_k = V \left (\Id - V^TE_{J_k} (V^T E_{J_k})^\dagger\right ) \quad \text{ for }\quad k = 1, \ldots, r.
    \end{equation}
\end{lemma}
\begin{proof}
By construction, $V_{\ell-1}(j_\ell,:)$ is orthogonal to all the matrices $V_k$ for $k \ge \ell$, hence $V_k$ is obtained from $V$ by subtracting $k$ rank-$1$ orthogonal projections:
\begin{equation*}
    V_k = V \left (\Id - V_0(j_1,:)^\dagger V_0(j_1,:) - \ldots - V_{k-1}(j_k,:)^\dagger V_{k-1}(j_k,:) \right ).
\end{equation*}
The matrix $V_0(j_1,:)^\dagger V_0(j_1,:) + \ldots + V_{k-1}(j_k,:)^\dagger V_{k-1}(j_k,:)$ is an orthogonal projection of rank at most $k$, which we denote $\widehat \Pi_k \in \mathbb{R}^{r \times r}$ for the sake of this proof, and such that $E_{J_k}^T V \widehat \Pi_k = E_{J_k}^T V$. Therefore, we have that $\widehat \Pi_k = (E_{J_k}^T V)^\dagger E_{J_k}^T V$ is the orthogonal projection onto the rows $E_{J_k}^T V$. This proves~\eqref{eq:Vk}. 
\end{proof}
A consequence of Lemma~\ref{lem:propV} is that $\|V_k\|_F^2 = r-k$ for $k=0,1,\ldots,r$, which implies that $p_1,\ldots,p_n$ in line~\ref{line:probnew} of Algorithm~\ref{alg:ARPnew} define a probability distribution over $\{1,\ldots,n\}$.}

\paragraph{Choice of low-rank approximation}  
The indices $j_1,\ldots, j_k$ returned by {Algorithm~\ref{alg:ARPnew}} {can be used to construct the orthogonal projection $\Pi_J$ onto the corresponding columns of $A$, but we can also consider the following rank-$r$ approximation, which is naturally induced by Algorithm~\ref{alg:ARPnew}:}
\begin{equation}\label{eq:lowrankapprox}
A\approx    A(:,J) V(J,:)^{-T} V^T.
\end{equation}
{This corresponds to an oblique projection onto the chosen columns of $A$; it is more efficient to compute than the orthogonal projection and still enjoys excellent guarantees; see Theorem~\ref{thm:mainthm} below.}

\begin{remark}
    {Algorithm~\ref{alg:ARPnew} is equivalent to taking a sample of $r$ points from a determinantal point process ($r$-DPP) with kernel matrix $VV^T$; see, e.g.,~\cite[Algorithm 2.1]{Barthelme2023}. In this work we provide an alternative approach to the analysis of Algorithm~\ref{alg:ARPnew} and its use for low-rank approximation purposes.} 
\end{remark}

\subsection{{Stable and efficient implementation of ARP via Householder reflections}} 
{We now describe a way to implement Algorithm~\ref{alg:ARPnew} using Householder reflections.} Householder reflectors~\cite[Sec 5.1.2]{GolubVanLoan2013} represent a numerically safe way of implementing the orthogonal projection~\eqref{eq:orthprojection}. Let $Q_1$ be a Householder reflector that transforms $V(j_1,:) Q_1$ to a multiple of the first unit vector, that is, the trailing $r-1$ entries of $V(j_1,:)$ are annihilated. Then~\eqref{eq:orthprojection} is equivalent to updating $V \gets V Q_1$ and setting the first column of $V$ to zero. It will be convenient to carry out the second part only implicitly, by considering the last $r-1$ columns of the updated $V$ in the subsequent steps. Repeatedly applying the described procedure 
leads to Algorithm~\ref{alg:ARP}, which is illustrated in Figure~\ref{fig:picture}. {Let us emphasize that Algorithm~\ref{alg:ARP} is mathematically equivalent to Algorithm~\ref{alg:ARPnew} and it requires $\mathcal O(nr^2)$ operations.}
\begin{algorithm}
\textbf{Input: } Matrix $V \in \R^{n \times r}$ with orthonormal columns defining a row space approximation~\eqref{eq:rowspaceapprox} 

\textbf{Output: }Indices $J=(j_1, \ldots, j_r)$  defining a column subset selection 

    \begin{algorithmic}[1]
    \STATE{Initialize $J = ()$ and ${W}_0 = V$}
    \FOR{$k = 1,\ldots,r$}
        \STATE{Set $p_j = \|{W}_{k-1}(j,k:r)\|_2^2/(r-k+1)$ for $j = 1,\ldots,n$}\label{line:prob}
        \STATE{Sample index $j_k$ according to probabilities $p_j$}
        \STATE{Update $J \leftarrow (J, j_k)$}
        \STATE{Update $W_k \leftarrow {W}_{k-1} Q_k$ with Householder reflector $Q_k$ that annihilates ${W}_{k-1}(j_k,k+1:r)$ 
        }\label{line:householder}
    \ENDFOR
    \end{algorithmic}
    \caption{Adaptive Randomized Pivoting for CSSP (ARP)}
    \label{alg:ARP}
\end{algorithm}

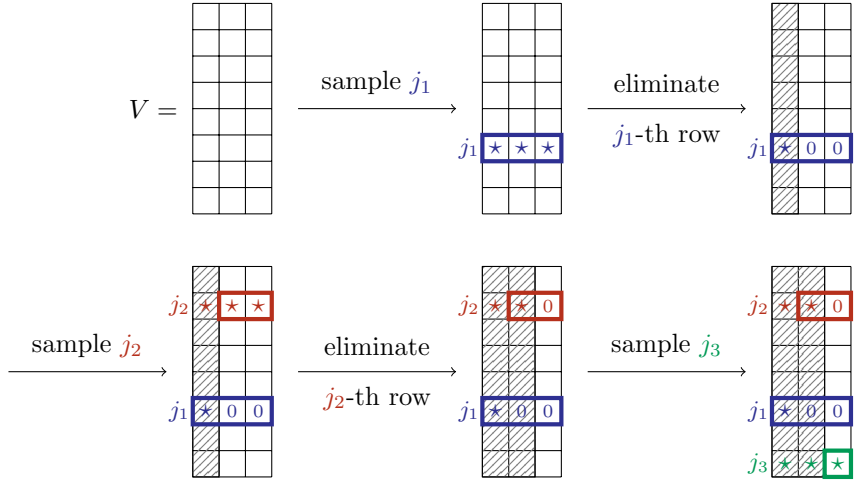
\begin{figure}
    \begin{center}
        \begin{tikzpicture}[scale=.35]
            \node at (-1.5,0) {$V = $};
            \draw (0,-4) grid (3,4);
            
            \draw[->] (4,0) -- (10,0);
            \node at (7,1) {sample \textcolor{Blue}{$j_1$}};
            \draw (11,-4) grid (14,4);
            \node at (10.5,-1.5) {\small \textcolor{Blue}{$j_1$}};
            \draw[ultra thick, Blue] (11,-2) rectangle (14,-1);
            \node[Blue] at (11.5,-1.5) {$\star$};
            \node[Blue] at (12.5,-1.5) {$\star$};
            \node[Blue] at (13.5,-1.5) {$\star$};
            
            \draw[->] (15,0) -- (21,0);
            \node at (18,1) {eliminate}; \node at (18,-1) {\textcolor{Blue}{$j_1$}-th row};
            \draw (22,-4) grid (25,4);
            \node at (21.5,-1.5) {\small \textcolor{Blue}{$j_1$}};
            \draw[pattern=north east lines, pattern color = gray] (22,-4) rectangle (23,4);
            \draw[ultra thick, Blue] (22,-2) rectangle (25,-1);
            \node[Blue] at (22.5,-1.5) {$\star$};
            \node[Blue] at (23.5,-1.5) {\scriptsize $0$};
            \node[Blue] at (24.5,-1.5) {\scriptsize $0$};
            
            \draw[->] (-7,-10) -- (-1,-10);
            \node at (-4,-9) {sample \textcolor{BrickRed}{$j_2$}};
            \draw (0,-14) grid (3, -6);
            \node at (-0.5,-11.5) {\small \textcolor{Blue}{$j_1$}};
            \node at (-0.5,-7.5) {\small \textcolor{BrickRed}{$j_2$}};
            \draw[pattern=north east lines, pattern color = gray] (0,-14) rectangle (1,-6);
            \draw[ultra thick, Blue] (0,-12) rectangle (3,-11);
            \node[Blue] at (0.5,-11.5) {$\star$};
            \node[Blue] at (1.5,-11.5) {\scriptsize $0$};
            \node[Blue] at (2.5,-11.5) {\scriptsize $0$};
            \draw[ultra thick, BrickRed] (1,-8) rectangle (3,-7);
            \node[BrickRed] at (0.5,-7.5) {$\star$};
            \node[BrickRed] at (1.5,-7.5) {$\star$};
            \node[BrickRed] at (2.5,-7.5) {$\star$};
            
            \draw[->] (4,-10) -- (10,-10);
            \node at (7,-9) {eliminate};
            \node at (7,-11) {\textcolor{BrickRed}{$j_2$}-th row};
            \draw (11,-14) grid (14,-6);
            \draw[pattern=north east lines, pattern color = gray] (11,-14) rectangle (13,-6);
            \node at (10.5,-11.5) {\small \textcolor{Blue}{$j_1$}};
            \node at (10.5,-7.5) {\small \textcolor{BrickRed}{$j_2$}};
            \draw[ultra thick, Blue] (11,-12) rectangle (14,-11);
            \node[Blue] at (11.5,-11.5) {$\star$};
            \node[Blue] at (12.5,-11.5) {\scriptsize $0$};
            \node[Blue] at (13.5,-11.5) {\scriptsize $0$};
            \draw[ultra thick, BrickRed] (12,-8) rectangle (14,-7);
            \node[BrickRed] at (11.5,-7.5) {$\star$};
            \node[BrickRed] at (12.5,-7.5) {$\star$};
            \node[BrickRed] at (13.5,-7.5) {\scriptsize $0$};
            
            \draw[->] (15,-10) -- (21,-10);
            \node at (18,-9) {sample \textcolor{ForestGreen}{$j_3$}};
            \draw (22,-14) grid (25,-6);
            \draw[pattern=north east lines, pattern color = gray] (22,-14) rectangle (24,-6);
            \node at (21.5,-11.5) {\small \textcolor{Blue}{$j_1$}};
            \node at (21.5,-7.5) {\small \textcolor{BrickRed}{$j_2$}};
            \node at (21.5,-13.5) {\small \textcolor{ForestGreen}{$j_3$}};
            \draw[ultra thick, Blue] (22,-12) rectangle (25,-11);
            \node[Blue] at (22.5,-11.5) {$\star$};
            \node[Blue] at (23.5,-11.5) {\scriptsize $0$};
            \node[Blue] at (24.5,-11.5) {\scriptsize $0$};
            \draw[ultra thick, BrickRed] (23,-8) rectangle (25,-7);
            \node[BrickRed] at (22.5,-7.5) {$\star$};
            \node[BrickRed] at (23.5,-7.5) {$\star$};
            \node[BrickRed] at (24.5,-7.5) {\scriptsize $0$};
            \draw[ultra thick, ForestGreen] (24,-14) rectangle (25,-13);
            \node[ForestGreen] at (22.5,-13.5) {$\star$};
            \node[ForestGreen] at (23.5,-13.5) {$\star$};
            \node[ForestGreen] at (24.5,-13.5) {$\star$};
        \end{tikzpicture}
    \end{center}
    \caption{Illustration of Algorithm~\ref{alg:ARP} applied to a matrix $V \in \R^{8 \times 3}$.}\label{fig:picture}
\end{figure}

The Householder reflector in Line~\ref{line:householder} of Algorithm~\ref{alg:ARP}
is an orthogonal matrix of the form
\begin{equation} \label{eq:formQk}
 Q_k = \begin{bmatrix}
        \Id_{k-1} & 0 \\
        0 & \Id_{n-k+1} - 2u_k u_k^T
       \end{bmatrix}
\end{equation}
for a suitably chosen unit vector $u_k$ such that
\begin{equation}\label{eq:propertyVk}
    {W}_k(j_k,k:r)  = {W}_{k-1}(j_k,k:r)(\Id-2u_k u_k^T) = {W}_k(j_k,k)e_1^T = \|{W}_{k-1}(j_k,k:r)\|_2 e_1^T.
\end{equation}
In particular, the first $k-1$ columns of ${W}_{k-1}$ are not modified when multiplying with $Q_k$, and previously introduced zeros are preserved. The latter implies that the probabilities of previously chosen indices are zero and, hence, no index is chosen twice.
As ${W}_{k-1}$ is obtained from $V$ by orthonogonal transformations, it remains an orthonormal basis. In particular, we have that $\|{W}_{k-1}(:,k:r)\|_F^2 = r-k+1$, which confirms that the quantities $p_j \ge 0$ computed in Line~\ref{line:prob} indeed represent a probability distribution. The final, updated matrix ${W}_r$ satisfies
\[
 {W}_r = V Q, \quad Q:= Q_1 Q_2 \cdots Q_r.
\]
In particular, we have that
\begin{equation} \label{eq:VVr}
 {W}_r(J,:) = V(J,:) Q.
\end{equation}
By construction, ${W}_r(J,:)$ is lower triangular. Its $k$th diagonal element contains the norm of the row ${W}_{k-1}(j_k,k:r)$ reduced in the $k$th step. By the choice of $p_j$, this norm is always nonzero. This allows us to conclude that ${W}_r(J,:)$ and, in turn, $V(J,:)$ are always invertible. {Finally, note that the rank-$r$ approximation~\eqref{eq:lowrankapprox} can be written as $A(:,J)W_r(J,:)^{-T}W_r^T$, and the fact that $W_r(J,:)$ is lower triangular allows one to conveniently compute the factor $W_r(J,:)^{-T} W_r^T$ by backward substitution.}

\subsection{Analysis}\label{sec:analysis}

To establish the error bound~\eqref{eq:mainresultintro} for Algorithm~\ref{alg:ARPnew} {(and therefore Algorithm~\ref{alg:ARP})}, we first analyze the error of the approximation~\eqref{eq:lowrankapprox}.  {Defining the oblique projector
\begin{equation} \label{eq:obliquedef2}
 \widetilde \Pi_J:= \Id - E_J ( V^T E_J)^{-1} V^T
\end{equation}
allows us to express the approximation error as
$
 A - A(:,J) V(J,:)^{-T} V^T = A \widetilde \Pi_J.
$
From~\eqref{eq:obliquedef2},
it immediately follows that $A \widetilde \Pi_J E_J = 0$. In other words, the columns $j_1,\ldots, j_r$ of $A$ are ``interpolated''.} The following lemma establishes {an important property} of $\widetilde \Pi_J$ { as well as a decomposition into a product of $r$ simpler oblique projections, each corresponding to a step of the algorithm}.
\begin{lemma} \label{lemma:remarkableprops2}
 Consider $\widetilde \Pi_J$ as in~\eqref{eq:obliquedef2} for an index set $J = (j_1,\ldots, j_r)$ such that $V(J,:) = E_J^T V$ is invertible. Then
 \begin{equation} \label{eq:remarkableprop1new}
  (\Id - VV^T) \widetilde \Pi_J = \widetilde \Pi_J.
 \end{equation}
 {Moreover, if we define}
 \begin{equation}\label{eq:widetildepik}
{\widetilde \Pi_k := \Id - e_{j_k} \frac{V_{k-1}(j_k,:) V_{k-1}^T}{\|V_{k-1}(j_k,:)\|_2^2 } \text{ for } k = 1, \ldots, r,}
 \end{equation}
 {which are oblique projections, we have}
 \begin{equation} \label{eq:remarkableprop2new}
    { \widetilde \Pi_1 \widetilde \Pi_2 \cdots \widetilde \Pi_k = \Id - E_{J_k} \left ( V^T E_{J_k} \right )^\dagger V^T,}
 \end{equation}
 {in particular $\widetilde \Pi_J = \widetilde \Pi_1 \cdots \widetilde \Pi_r$.}
\end{lemma}
\begin{proof}
 The first property~\eqref{eq:remarkableprop1new} is verified by straightforward calculation:
 \begin{equation*}
     (\Id-VV^T) \widetilde \Pi_J = \Id - VV^T - E_J(V^TE_J)^{-1}V^T + VV^TE_J(V^TE_J)^{-1}V^T = \widetilde \Pi_J.
 \end{equation*}
 {We have
\begin{equation*}
    \widetilde \Pi_k^2 = \Id - 2 e_{j_k} \frac{V_{k-1}(j_k,:) V_{k-1}^T}{\|V_{k-1}(j_k,:)\|_2^2} + e_{j_k} \frac{V_{k-1}(j_k,:) V_{k-1}(j_k,:)^T V_{k-1}(j_k,:)V_{k-1}^T}{\|V_{k-1}(j_k,:)\|_2^4} = \widetilde \Pi_k,
\end{equation*}
which shows that the matrices $\widetilde \Pi_k$ are oblique projections.}
To show the property~\eqref{eq:remarkableprop2new}, we first note that {$V_{k-1}(j_{\ell},:) = 0$ for $\ell < k$, because it has been explicitly zeroed out by the algorithm, which} implies $\widetilde \Pi_k e_{j_\ell} = e_{j_\ell}$. For $\ell = k$, we have
$\widetilde \Pi_k e_{j_k} = 0$. Taken together, these relations establish the interpolation property
\begin{equation} \label{eq:interpPJnew}
 \widetilde \Pi_1 \cdots \widetilde \Pi_{{k}} E_{{J_k}} = 0.
\end{equation}
On the other hand, the {construction of the matrices $V_1, \ldots, V_k$ and the }form of the factors $\widetilde \Pi_k$ {imply} that
$
 \widetilde \Pi_1 \cdots \widetilde \Pi_{{k}} = \Id - E_{{J_k}} X V^T
$
for some ${k \times k}$ matrix $X$. Multiplying this expression with $E_{{J_k}}$ and using~\eqref{eq:interpPJnew} implies {$X = ( V^T E_{{J_k}})^\dagger$}, which shows~\eqref{eq:remarkableprop2new}. 
\end{proof}

Setting $\widetilde A := A (\Id-VV^T)$, the first property~\eqref{eq:remarkableprop1new} of Lemma~\ref{lemma:remarkableprops2} establishes
\begin{equation} \label{eq:AandAtilde}
 A - A(:,J)V(J,:)^{-T}V^T = A \widetilde \Pi_J =  \widetilde A \widetilde \Pi_J = \widetilde A - \widetilde A(:,J)V(J,:)^{-T}V^T.
\end{equation}
This relation is quite remarkable, because it provides a one-to-one correspondence between column selection for the original matrix $A$ and column selection for the residual matrix $\widetilde A$. 
It is also the basis for both, Osinsky's analysis~\cite[Theorem 1]{Osinsky2023} and our main result.
\begin{theorem}\label{thm:mainthm}
    Let $A \in \R^{m \times n}$ and let $V \in \R^{n \times r}$ be an orthonormal basis. Then the random index set $J$ returned by Algorithm~\ref{alg:ARPnew} satisfies
    \begin{equation}\label{eq:mainresult}
        \mathbb{E}\left [\|A - A(:,J) V(J,:)^{-T} V^T\|_F^2\right ] = (r+1) \|A - AVV^T\|_F^2.
    \end{equation}
\end{theorem}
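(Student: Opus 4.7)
The plan is to prove the identity by induction on $r$, conditioning on the first sampled index $j_1$ and exploiting the factorization $\widetilde\Pi_J = \widetilde\Pi_1\widetilde\Pi_2\cdots\widetilde\Pi_r$ from Lemma~\ref{lemma:remarkableprops} together with the correspondence $A\widetilde\Pi_J = \widetilde A \widetilde\Pi_J$ recorded in~\eqref{eq:AandAtilde}. The base case $r=0$ is trivial: the product defining $J$ is empty and both sides equal $\|A\|_F^2$. For the inductive step, assume the result for any matrix and any orthonormal basis with $r-1$ columns, and consider a run of Algorithm~\ref{alg:ARP} on $V\in\R^{n\times r}$.

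The first step I would take is a \emph{reduction to a sub-problem}. After sampling $j_1$ and applying the Householder reflector $Q_1$, the remaining $r-1$ steps of the algorithm act only on the trailing block $V' := V_1(:,2{:}r)\in\R^{n\times(r-1)}$, which has orthonormal columns. By Lemma~\ref{lemma:remarkableprops} applied to the reduced problem, the corresponding oblique projector $\widetilde\Pi'_{J'}$ based on $(V', j_2,\ldots,j_r)$ coincides with $\widetilde\Pi_2\cdots\widetilde\Pi_r$, because the trailing Householder reflectors $Q_2,\ldots,Q_r$ act identically whether one starts from $V$ or from $V'$. Setting $A_1 := \widetilde A\,\widetilde\Pi_1$, the correspondence~\eqref{eq:AandAtilde} gives $A\widetilde\Pi_J = A_1\widetilde\Pi'_{J'}$, so the induction hypothesis applied to $A_1$ and $V'$ yields
\begin{equation*}
 \mathbb E\!\left[\|A\widetilde\Pi_J\|_F^2 \,\big|\, j_1\right] = r\cdot \|A_1(\Id - V'V'^T)\|_F^2.
\end{equation*}

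The second, computational step is to evaluate $\|A_1(\Id-V'V'^T)\|_F^2$ explicitly. Writing $v_1 := V_1(:,1)\in\mathrm{range}(V)$ and $\alpha_1 := V_1(j_1,1) = \|V(j_1,:)\|_2$, orthogonality of $Q_1$ gives $V_1V_1^T = VV^T$, hence $\Id - V'V'^T = (\Id - VV^T) + v_1v_1^T$. Thus
\begin{equation*}
 A_1(\Id - V'V'^T) = A_1(\Id - VV^T) + A_1 v_1 v_1^T,
\end{equation*}
and these two summands are Frobenius-orthogonal since $(\Id-VV^T)v_1 = 0$. Using $A_1 = \widetilde A - \widetilde A\,e_{j_1}v_1^T/\alpha_1$ together with $v_1^T(\Id-VV^T) = 0$, idempotency of $\Id-VV^T$, and $\widetilde A v_1 = 0$, one finds $A_1(\Id-VV^T) = \widetilde A$ and $A_1 v_1 = -\widetilde A(:,j_1)/\alpha_1$. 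This reduces the conditional expectation to
\begin{equation*}
 \mathbb E\!\left[\|A\widetilde\Pi_J\|_F^2 \,\big|\, j_1\right] = r\|\widetilde A\|_F^2 + r\,\frac{\|\widetilde A(:,j_1)\|_2^2}{\|V(j_1,:)\|_2^2}.
\end{equation*}

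The final step is averaging over $j_1$ with $\mathbb P\{j_1 = j\} = \|V(j,:)\|_2^2/r$: the $\|V(j_1,:)\|_2^2$ factors cancel and $r\sum_j\|\widetilde A(:,j)\|_2^2/r = \|\widetilde A\|_F^2$, giving $(r+1)\|\widetilde A\|_F^2$. The main obstacle I anticipate is the careful bookkeeping for the reduction step, namely verifying that the sub-problem really is an instance of ARP with the reduced basis $V'$ (so that the inductive hypothesis applies with the correct constant $r$), and that the cancellations $v_1\in\mathrm{range}(V)$, $\widetilde A v_1=0$, $v_1^T(\Id-VV^T)=0$ are used consistently to reduce the expression to a single scalar depending only on $j_1$; the rest is essentially algebraic.
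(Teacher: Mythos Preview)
Your proposal is correct. The argument uses exactly the same ingredients as the paper's proof---the correspondence $A\widetilde\Pi_J=\widetilde A\widetilde\Pi_J$, the product factorization $\widetilde\Pi_J=\widetilde\Pi_1\cdots\widetilde\Pi_r$ from Lemma~\ref{lemma:remarkableprops}, a Pythagorean split, and the cancellation of $\|V(j,:)\|_2^2$ in the averaging step---but packages them differently. The paper conditions on $j_1,\ldots,j_{k-1}$ for each $k$, obtains the one-step relation $\mathbb{E}[\|\widetilde A_k\|_F^2\mid j_1,\ldots,j_{k-1}]=\tfrac{r-k+2}{r-k+1}\|\widetilde A_{k-1}\|_F^2$, and telescopes. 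You instead condition only on $j_1$, recognize the remaining $r-1$ steps as an ARP instance on $V'=V_1(:,2{:}r)$, and invoke the inductive hypothesis. Both routes land on the same identity $\|A_1(\Id-V'V'^T)\|_F^2=\|\widetilde A\|_F^2+\|\widetilde A(:,j_1)\|_2^2/\|V(j_1,:)\|_2^2$; your induction makes the recursive structure explicit at the price of the extra bookkeeping (which you identified) to verify the sub-problem is a bona fide ARP instance, while the paper's telescoping avoids that verification but requires tracking the full sequence $\widetilde A_0,\ldots,\widetilde A_r$.
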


\begin{proof}
In view of~\eqref{eq:AandAtilde}, establishing~\eqref{eq:mainresult}  is equivalent to showing
$\mathbb{E}[\|\widetilde A \widetilde \Pi_J\|_F^2] = (r+1) \|\widetilde A\|_F^2$.
We now define $\widetilde A_{0} := \widetilde A$ and, recursively,
\begin{equation}  \label{eq:defwaknew}
 \widetilde A_{k} := \widetilde A_{k-1} \widetilde\Pi_k = \widetilde A_{k-1} - \widetilde A_{k-1}(:,j_k) { \frac{V_{k-1}(j_k,:) V_{k-1}^T}{\|V_{k-1}(j_k,:)\|_2^2}}.
\end{equation}
By~\eqref{eq:remarkableprop2new}, $\widetilde A_{r} = \widetilde A \widetilde \Pi_J$.
{We claim that the rows of $\widetilde A_k$ are orthogonal to the columns of $V_k$. Indeed, we have $$\widetilde A_k = \widetilde A_0 \widetilde \Pi_1 \widetilde \Pi_2 \cdots \widetilde \Pi_k = A(\Id - VV^T)\left (\Id - E_{J_k}(V^T E_{J_k})^\dagger V^T \right )$$
where the first equality follows from the definition of $\widetilde A_k$ and the second follows from~\eqref{eq:remarkableprop2new} and the definition of $\widetilde A_0$. Then we have
\begin{align*}
    \widetilde A_k V_k &= A(\Id - VV^T)\left (\Id - E_{J_k}(V^T E_{J_k})^\dagger V^T \right ) V\left ( \Id - V^T E_{J_k} (V^TE_{J_k})^\dagger \right ) \\
    & = A(\Id - VV^T)(V - E_{J_k}(V^TE_{J_k})^\dagger V^TV) \left ( \Id - V^T E_{J_k} (V^TE_{J_k})^\dagger \right )\\
    & = A(V-VV^TV) - A(\Id-VV^T)E_{J_k}(V^T E_{J_k})^\dagger \left ( \Id - V^T E_{J_k} (V^TE_{J_k})^\dagger \right ) = 0,
\end{align*}
where the first equality follows from Lemma~\ref{lem:propV} and the rest follows from orthonormality of the columns of $V$ and properties of pseudoinverses.}

Using conditional expectation, it follows that
\begin{align}
& \mathbb{E}[\|\widetilde A_k\|_F^2 \mid j_1, \ldots, j_{k-1}] = \mathbb{E}[\|\widetilde A_{k-1} \widetilde\Pi_k\|_F^2 \mid j_1, \ldots, j_{k-1}] \nonumber\\
& =  \mathbb{E} \left [\|\widetilde A_{k-1}\|_F^2 + \left\|\widetilde A_{k-1}(:,j_k) {\frac{V_{k-1}(j_k,:) V_{k-1}^T}{\|V_{k-1}(j_k,:)\|_2^2}}   \right\|_F^2 \mid j_1, \ldots, j_{k-1} \right ] \nonumber \\
& { =  \mathbb{E} \left [\|\widetilde A_{k-1}\|_F^2 +  \frac{\|\widetilde A_{k-1}(:,j_k)\|_2^2}{\|V_{k-1}(j_k,:)\|_2^2} \mid j_1, \ldots, j_{k-1} \right ]} \label{eq:normAktildenew}\\
& =  \|\widetilde A_{k-1}\|_F^2 + \frac{1}{r-k+1} \sum_{j \notin J_{k-1}} \|\widetilde A_{k-1}(:,j)\|_2^2 = \frac{r-k+2}{r-k+1}  \| \widetilde A_{k-1} \|_F^2.\nonumber
\end{align}
Here, the second equality follows from~\eqref{eq:defwaknew} using Pythagoras, because the rows of $\widetilde A_{k-1}$ are orthogonal to the {columns of $V_{k-1}$; to obtain the third equality we used Lemma~\ref{lem:propV} to compute}
\begin{align*}
    {\|V_{k-1}(j_k,:)V_{k-1}^T\|_2} & {= \|V(j_k,:) \left(\Id - V^T E_{J_{k-1}}(V^T E_{J_{k-1}})^\dagger\right )^2 V^T\|_2 }\\
    & {= \|V(j_k,:) \left (\Id - V^T E_{J_{k-1}}(V^T E_{J_{k-1}})^\dagger\right )\|_2 = \|V_{k-1}(j_k,:)\|_2};
\end{align*}
{the fourth equality follows from the definition of the sampling probabilities in Line~\ref{line:prob} of {Algorithm~\ref{alg:ARPnew}}}. Using the law of total expectation, we obtain that
\begin{align*}
    \mathbb{E}[\| \widetilde A_r \|_F^2] & = \mathbb{E}[\mathbb{E}[\| \widetilde A_{r-1} \|_F^2 \mid j_1,\ldots,j_{k-1}]] = \frac{2}{1} \mathbb{E}[\|\widetilde A_{r-1}\|_F^2] \\
    &= \cdots = \frac{3}{2}\cdot \frac{2}{1} \mathbb{E}[\|\widetilde A_{r-2}\|_F^2]   = \cdots =  (r+1) \| \widetilde A_0\|_F^2,
\end{align*}
which completes the proof using $A \widetilde \Pi_J =  \widetilde A \widetilde \Pi_J = \widetilde A_r $.
\end{proof}

\color{black}
By Jensen's inequality, Theorem~\ref{thm:mainthm} implies that
\begin{align*}
    \mathbb{E}[\|A - A(:,J)V(J,:)^{-T} V^T\|_F] & \le \left ( \mathbb{E}[\|A - A(:,J)V(J,:)^{-T} V^T\|_F^{{2}}]\right )^{1/2} \\
    & =\sqrt{r+1} \|A - AVV^T\|_F.
\end{align*}
Moreover, Markov's inequality allows us to turn the second moment bound of Theorem~\ref{thm:mainthm} into a tail bound. For example, with probability at least $99\%$ we have that 
\begin{equation*}
    \|A-A(:,J)V(J,:)^{-T}V^T\|_F \le 10\sqrt{r+1}\|A-AVV^T\|_F.
\end{equation*}
Because orthogonal projection onto the column space of $A(:,J)$ produces the minimal error, one obtains the same bounds
for $\mathbb{E}[\|A - \Pi_J A\|_F^2]$. In particular, when choosing {$V = \Vopt$, which contains right singular vectors belonging to $r$ largest singular values} of $A$, we obtain a corollary that matches~\eqref{eq:bestapproximation} in expectation.
\begin{corollary} \label{cor:best}
    The random index set $J$ returned by Algorithm~\ref{alg:ARP} applied to $V = \Vopt$ satisfies
    \begin{align*}
        \mathbb{E}[\|A - \Pi_J A\|_F^2]  \le \mathbb{E}[\|A - A(:,J) \Vopt(J,:)^{-T} \Vopt^T\|_F^2] 
         \le (r+1)\big( \sigma_{r+1}^2(A) + \ldots + \sigma_n^2(A) \big),
    \end{align*}
    {where $\Pi_J$ denotes the orthogonal projector onto the column space of $A(:, J)$.}
\end{corollary}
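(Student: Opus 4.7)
The plan is to view this corollary as an essentially mechanical consequence of Theorem~\ref{thm:mainthm} combined with two elementary observations, so I will split the argument into three short steps corresponding to the three quantities in the displayed chain.

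First I would establish the pointwise (i.e., for every realization of $J$) inequality
\begin{equation*}
 \|A - \Pi_J A\|_F^2 \le \|A - A(:,J)\Vopt(J,:)^{-T}\Vopt^T\|_F^2,
\end{equation*}
which then passes to expectations by monotonicity. This follows from the defining property of the orthogonal projector $\Pi_J$ onto the column space of $A(:,J)$: among all matrices of the form $A(:,J)X$ with $X \in \R^{r\times n}$, the minimizer of the Frobenius distance to $A$ is $\Pi_J A = A(:,J)A(:,J)^\dagger A$. Since $\Vopt(J,:)$ is invertible (as guaranteed by the construction in Algorithm~\ref{alg:ARP}, cf.\ the discussion around~\eqref{eq:VVr}), $A(:,J)\Vopt(J,:)^{-T}\Vopt^T$ is a valid competitor and the inequality follows.

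Next I would apply Theorem~\ref{thm:mainthm} verbatim with the particular choice $V = \Vopt$, giving
\begin{equation*}
 \mathbb{E}\bigl[\|A - A(:,J)\Vopt(J,:)^{-T}\Vopt^T\|_F^2\bigr] = (r+1)\,\|A - A\Vopt\Vopt^T\|_F^2.
\end{equation*}
Finally, I would invoke~\eqref{eq:bestapproximation}, which identifies $\|A - A\Vopt\Vopt^T\|_F^2$ with $\sigma_{r+1}^2(A) + \cdots + \sigma_n^2(A)$, and chain the three (in)equalities to conclude.

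There is no real obstacle here; the only point that deserves a line of justification is the first inequality, which relies on recognizing $A(:,J)\Vopt(J,:)^{-T}\Vopt^T$ as lying in the affine class over which $\Pi_J A$ is the Frobenius-optimal approximation. Everything else is a direct substitution into Theorem~\ref{thm:mainthm}.
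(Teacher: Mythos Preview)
Your proposal is correct and follows exactly the same approach as the paper: the minimality of the orthogonal projection $\Pi_J$ among all approximations of the form $A(:,J)X$ gives the first inequality, and Theorem~\ref{thm:mainthm} with $V=\Vopt$ together with~\eqref{eq:bestapproximation} gives the second. The paper states this just as tersely in the paragraph preceding the corollary.
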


Let us emphasize that there is no need to use the optimal $V$ for getting good upper bounds. {In fact, one can use} the randomized SVD~\cite{rsvd} with a bit of oversampling to compute $V$ {and get a bound that is only moderately larger compared to the one by Corollary~\ref{cor:best}.}
{\begin{corollary}
Consider the following procedure:
    \begin{enumerate}
 \item Sample Gaussian random matrix $\Omega \in \R^{m \times (r+2)}$ and compute $Y = A^T \Omega$.
 \item Compute orthonormal basis $V \in \R^{n \times (r+2)}$ by performing QR decomposition of $Y$.
 \item Obtain index set $J$ of cardinality $r+2$ by applying Algorithm~\ref{alg:ARP} to $V$.
\end{enumerate}
Then the resulting index set $J$ satisfies
\begin{equation*}
    \mathbb{E}[\|A - \Pi_J A\|_F^2] \le (r+3)(r+1) \left ( \sigma_{r+1}^2 + \cdots + \sigma_n^2(A) \right ).
\end{equation*}
\end{corollary}
\begin{proof}
We have
\begin{align*}
    \mathbb{E}_{J,\Omega}[\|A - \Pi_J A\|_F^2] & = \mathbb{E}_{\Omega}\left [ \mathbb{E}_J [A - \Pi_J A \|_F^2 \mid \Omega] \right ] = (r+3) \mathbb{E}_\Omega[\|A - AVV^T\|_F^2] \\
    & \le (r+1)(r+3)\left ( \sigma_{r+1}^2 + \cdots + \sigma_n^2(A) \right ),
\end{align*}
where the first equality follows from the law of total expectation, the second equality follows from Theorem~\ref{thm:mainthm}, and the final inequality follows from Theorem 10.5 in~\cite{rsvd} and its proof.
\end{proof}}

The slightly increased cardinality of the index set could be avoided by performing a recompression of $A VV^T$. However, this would require multiplying $A$ with $V$. The plain procedure above comes with the benefit that only $A^T \Omega$ needs to be computed. By using a structured random matrix (e.g., a subsampled randomized trigonometric transform), this step can be accelerated, potentially significantly. 

\subsection{Derandomization of Algorithm~\ref{alg:ARP}}\label{sec:derandomizedCSSP}

By \emph{derandomizing} the proof of Theorem~\ref{thm:mainthm}, it is possible to recover Osinsky's algorithm~\cite[Algorithm 1]{Osinsky2023} and develop a deterministic version of Algorithm~\ref{alg:ARP} that outputs an index set $J$ such that
\begin{equation}\label{eq:detCSSP}
\|A - A(:,J)V(J,:)^{-T} V^T\|_F^2 \le (r+1) \|A - AVV^T\|_F^2.
\end{equation}
Assume that $j_1, \ldots, j_{k-1}$ have already been (deterministically) selected, and the corresponding matrix $\widetilde A_{k-1} = \widetilde A_0 \widetilde \Pi_{1} \cdots \widetilde \Pi_{k-1}$ has been computed explicitly, with the oblique projections defined as in~\eqref{eq:widetildepik}. 
At the $k$th step, choosing
\begin{equation}\label{eq:jk}
    j_k \in \arg\min_{j} \|\widetilde A_{k-1}(:,j)\|_2^2 / \|V_{k-1}(j,{:})\|_2^2
\end{equation}
minimizes $\|\widetilde A_k\|_F^2$ given $\widetilde A_{k-1}$; see~\eqref{eq:normAktildenew}. Because the minimum cannot be larger than the expected value, the inequality~\eqref{eq:normAktildenew} implies that 
\begin{equation}\label{eq:det}
\|\widetilde A_k\|_F^2 \le \frac{r-k+2}{r-k+1}\|\widetilde A_{k-1}\|_F^2.
\end{equation}
Iterating the inequalities~\eqref{eq:det} for $k=1, \ldots, r$ proves~\eqref{eq:detCSSP}. 

{To write~\eqref{eq:jk} in terms of the matrices $W_k$ appearing in Algorithm~\ref{alg:ARP} it is sufficient to note that $\|V_{k-1}(j,:)\|_2 = \|W_{k-1}(j, k:r)\|_2$ for all indices $j$ and for $k=1,\ldots,r$. Moreover, the oblique projections $\widetilde \Pi_k$ can be written in terms of the matrices $W_k$ as
\begin{equation}\label{eq:PiktildeW}
    \widetilde \Pi_k = \Id - e_{j_k} \frac{W_k(:,k)^T}{W_k(j_k,k)} \quad \text{ for } k = 1, \ldots, r.
\end{equation}}
The described derandomization of Algorithm~\ref{alg:ARP} is summarized in Algorithm~\ref{alg:Osinsky}. 

\begin{algorithm}
\textbf{Input: } Matrix $A \in \R^{m \times n}$ and matrix $V \in \R^{n \times r}$ with orthonormal columns defining a row space approximation~\eqref{eq:rowspaceapprox}

\textbf{Output: }Indices $J=(j_1, \ldots, j_r)$  defining a column subset selection of $A$

    \begin{algorithmic}[1]
    \STATE{Compute $\widetilde A_0 \leftarrow A - AVV^T$}\label{line:Atilde}
    \STATE{{Initialize $J_0 \leftarrow ()$ and $W_0 \leftarrow V$}}
    \FOR{$k = 1,\ldots,r$}
        \STATE{Set $j_k \in \arg\min_{j} \|\widetilde A_{k-1}(:,j)\|_2^2 / \|{W}_{k-1}(j,k:r)\|_2^2$}
        \STATE{{Set $J_k \leftarrow (J_{k-1}, j_k)$}}
        \STATE{Update ${W}_k \leftarrow {W}_{k-1} Q_k$ with Householder reflector $Q_k$ that annihilates ${W}_{k-1}(j_k,k+1:r)$
        \STATE{Update $\widetilde A_k \leftarrow \widetilde A_{k-1} \widetilde \Pi_k$ {using~\eqref{eq:PiktildeW}}}\label{line:updateAtilde}
        }
    \ENDFOR
    \STATE{{Set $J \leftarrow J_r$}}
    \end{algorithmic}
    \caption{Osinsky's deterministic algorithm for CSSP~\cite[Algorithm 1]{Osinsky2023}}
    \label{alg:Osinsky}
\end{algorithm}

\begin{remark}[Computational cost of randomized vs. derandomized algorithms]\label{rmk:costosinsky}
The deterministic guarantee~\eqref{eq:detCSSP} comes at the expense of a higher computational cost compared to Algorithm~\ref{alg:ARP}. In particular, the need for computing and updating the column norms of the residual matrix $\widetilde A_{k-1}$ is relatively costly. Already for the initial residual $\widetilde A_{0}$, this requires computing the column norms of $A$ and multiplying $A$ with $V$. As the column norms are subjective to subtractive cancellation in the subsequent computations, they need to be determined quite accurately, which essentially means that
the \emph{whole} matrix $A$ needs to be explicitly available. In contrast, Algorithm~\ref{alg:ARP} only requires access to the columns in $J$, once $V$ is available. When combined with a cheap procedure to determine $V$ (e.g., by the randomized SVD with a highly structured random matrix $\Omega$), Algorithm~\ref{alg:ARP} becomes an appealing alternative to Algorithm~\ref{alg:Osinsky}.
\end{remark}

In view of the disadvantages mentioned in Remark~\ref{rmk:costosinsky}, it is tempting to derandomize Algorithm~\ref{alg:ARP} differently, in a way that does not require access to $A$: At each step, we select $j_k = \arg\max_j p_j$, with $p_j$ defined by the updated row norms in Line~\ref{line:prob} of Algorithm~\ref{alg:ARP}. However, the following example shows that such a  greedy approach does not necessarily yield good results.
\begin{example}[Greedy on $V$ is not enough!]\rm   Consider the matrix
    \begin{equation*}
        A = \begin{bmatrix} 1 & 0 \\ 0 & 10^{-4} \end{bmatrix} \begin{bmatrix} \frac{2}{\sqrt{n+3}} & - \frac{1}{\sqrt{n+3}} & - \frac{1}{\sqrt{n+3}} & \cdots & -\frac{1}{\sqrt{n+3}} \\ \sqrt{\frac{n-1}{n+3}} & \frac{2}{\sqrt{(n-1)(n+3)}}& \frac{2}{\sqrt{(n-1)(n+3)}} & \cdots & \frac{2}{\sqrt{(n-1)(n+3)}}\end{bmatrix},
    \end{equation*}
    and $r = 1$. By~\eqref{eq:bestapproximation}, there is a column index $j$ such that
    \begin{equation}\label{eq:wanted}
    \|A-A(:,j)A(:,j)^\dagger A\|_F^2 \le 2 \cdot 10^{-8}.
    \end{equation}
    Using $V = \begin{bmatrix} \frac{2}{\sqrt{n+3}} & - \frac{1}{\sqrt{n+3}} & - \frac{1}{\sqrt{n+3}} & \cdots & -\frac{1}{\sqrt{n+3}} \end{bmatrix}^T$ as a rank-$1$ row space approximation of $A$, the greedy choice explained above would choose the first index $j=1$ as this is the element of largest magnitude in $V$. When taking, e.g., $n = 10,000$, one has that
    \begin{equation*}
        \|A - A(:,j)A(:,j)^\dagger A\|_F^2 \approx \begin{cases}
            2.5 \cdot 10^{-5} & \text{ if } j = 1,\\
            10^{-8} & \text{ if } j \neq 1.
        \end{cases}
    \end{equation*}
    Choosing $j_1 = 1$ therefore gives a column subset selection error that is significantly worse than what is predicted by~\eqref{eq:wanted}. In contrast, Algorithm~\ref{alg:ARP} does \emph{not} select $j_1 = 1$ with probability $(n+1)/(n+3)$, which is very close to $1$ for large values of $n$.
\end{example}

\section{DEIM}\label{sec:DEIM}

Algorithm~\ref{alg:ARP} is an attractive choice for selecting indices in the Discrete Empirical Interpolation Method (DEIM), which is a popular method for approximating nonlinear functions in reduced-order modelling~\cite{Chaturantabut2010}. Given a function $f\equiv f(\xi) \in \R^n$ that is expensive to evaluate and known to be well approximated by an orthonormal basis $V \in \R^{n \times r}$, in the sense of $f \approx VV^Tf$, the goal of DEIM is to construct a good approximation of $f$ that does not require the evaluation of the whole function. For this purpose, 
DEIM selects a set $I$ of $r$ indices and returns the approximation
\[
 f \approx V (E_I^T V)^{-1} E_I^T f = V (E_I^T V)^{-1} f(I),
\]
that is, only the part of $f$ contained in $I$ needs to be evaluated.
 {Letting $\| \cdot \|_2$ denotes the spectral norm of a matrix,} the quantity $\|(E_I^T V)^{-1}\|_2 = \|V(I,:)^{-1}\|_2$ gives an indication of the quality of the approximation because
\begin{equation*}
    \|f - V (E_I^T V)^{-1} E_I^T f\|_2  \le \|(E_I^TV)^{-1}\|_2 \|f - VV^T f\|_2;
\end{equation*}
see~\cite[Lemma 3.2]{Chaturantabut2010}. It is known that there \emph{exists} an index set $I$ such that $\|(E_I^TV)^{-1}\|_2 \le \sqrt{1+r(n-r)}$; see~\cite[Theorem 2.1]{Drmac2016}.

We suggest to apply Algorithm~\ref{alg:ARP} to $V$ in order to determine the index set $I$ for DEIM. As a corollary of Theorem~\ref{thm:mainthm}, this approach enjoys guarantees -- in expectation -- on the random index set $I$ that match or exceed the best deterministic bound mentioned above. 
\begin{corollary} \label{cor:deim}
    Given an orthonormal basis $V \in \R^{n \times r}$, the index set returned by Algorithm~\ref{alg:ARP} satisfies
    \begin{equation}\label{eq:deimbound}
    \mathbb{E}[\|f - V (E_I^T V)^{-1} E_I^T f\|_2^2] \le (r+1)\| f - VV^Tf\|_2^2
    \end{equation}
    for an arbitrary but fixed vector $f \in \R^n$.
    Moreover, \[
\mathbb{E}[\|(E_I^TV)^{-1}\|_F^2] = r(n-r+1), \quad 
\mathbb{E}[\|(E_I^TV)^{-1}\|_2^2]  \le r(n-r)+1.
\]
\end{corollary}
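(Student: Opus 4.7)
The plan is to derive all three claims of Corollary~\ref{cor:deim} by specializing Theorem~\ref{thm:mainthm} to two cleverly chosen instances of the matrix $A$, and then converting the Frobenius-norm bound to a spectral-norm bound via a singular-value comparison.

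\textbf{Proof of the DEIM error bound~\eqref{eq:deimbound}.} I would treat the fixed vector $f$ as a one-row matrix $A = f^T \in \R^{1 \times n}$ and apply Theorem~\ref{thm:mainthm}. With this choice, $A(:,I) V(I,:)^{-T} V^T = f(I)^T V(I,:)^{-T} V^T$, whose transpose is exactly $V(E_I^T V)^{-1} E_I^T f$. Similarly, $A V V^T$ transposes to $V V^T f$. Theorem~\ref{thm:mainthm} then directly yields the equality $\mathbb{E}[\|f - V(E_I^TV)^{-1}E_I^T f\|_2^2] = (r+1)\|f - VV^Tf\|_2^2$, which is even stronger than the inequality claimed.

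\textbf{Proof of $\mathbb{E}[\|(E_I^TV)^{-1}\|_F^2] = r(n-r+1)$.} Here the key idea is to apply Theorem~\ref{thm:mainthm} to $A = \Id - VV^T$, the orthogonal projector onto the complement of the column space of $V$. Since $AV = 0$, one has $\|A - AVV^T\|_F^2 = \|A\|_F^2 = n - r$. Moreover, $A - A(:,I)V(I,:)^{-T}V^T$ simplifies (after expanding $(\Id - VV^T)(:,I) = E_I - V V(I,:)^T$) to $\Id - E_I V(I,:)^{-T} V^T = \widetilde\Pi_I$. Expanding $\|\widetilde\Pi_I\|_F^2$ using $\text{tr}$-cyclicity, $V^TV = \Id_r$, and $V(I,:)^{-1}V(I,:) = \Id_r$, one obtains the closed-form identity
\begin{equation*}
\|\Id - E_I V(I,:)^{-T}V^T\|_F^2 = n - 2r + \|V(I,:)^{-1}\|_F^2.
\end{equation*}
Equating the expectation of this expression with $(r+1)(n-r)$ from Theorem~\ref{thm:mainthm} and solving gives $\mathbb{E}[\|V(I,:)^{-1}\|_F^2] = (r+1)(n-r) - n + 2r = r(n-r+1)$.

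\textbf{Proof of the spectral-norm bound.} The final ingredient is the observation that $V(I,:)$ is an $r \times r$ submatrix of a matrix with orthonormal columns, so all its singular values are bounded above by $1$. Consequently every singular value of $V(I,:)^{-1}$ is at least $1$, and summing over all singular values yields the deterministic inequality
\begin{equation*}
\|V(I,:)^{-1}\|_2^2 \le \|V(I,:)^{-1}\|_F^2 - (r-1).
\end{equation*}
Taking expectations and substituting the previous identity gives $\mathbb{E}[\|(E_I^TV)^{-1}\|_2^2] \le r(n-r+1) - (r-1) = r(n-r) + 1$.

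The only nonobvious step is choosing $A = \Id - VV^T$ in part two and recognizing that the residual of the resulting oblique projection collapses to $\widetilde\Pi_I$; everything else is algebraic bookkeeping. There is no serious technical obstacle, since Theorem~\ref{thm:mainthm} has already done all the heavy lifting.
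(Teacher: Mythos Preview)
Your proposal is correct and follows essentially the same route as the paper: specialize Theorem~\ref{thm:mainthm} to $A=f^T$ for~\eqref{eq:deimbound}, extract $\mathbb{E}[\|\widetilde\Pi_I\|_F^2]$ from Theorem~\ref{thm:mainthm} and combine it with the identity $\|\widetilde\Pi_I\|_F^2 = n-2r+\|V(I,:)^{-1}\|_F^2$ for the Frobenius bound, and finish with the singular-value comparison $\sigma_k(V(I,:))\le 1$ for the spectral bound. The only cosmetic difference is that the paper applies Theorem~\ref{thm:mainthm} to $A=\Id_n$ rather than $A=\Id-VV^T$; since $\Id_n(:,I)=E_I$ this gives $\widetilde\Pi_I$ as the residual immediately without the extra expansion you carry out, but the two choices are equivalent (indeed $\|A-AVV^T\|_F^2=n-r$ either way).
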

\begin{proof}
The result~\eqref{eq:deimbound} follows from applying Theorem~\ref{thm:mainthm} to $A := f^T$. Theorem~\ref{thm:mainthm} applied to $A = \Id_n$ gives
\begin{equation}\label{eq:id}
    \mathbb{E}[\|\widetilde \Pi_I\|_F^2] = (r+1)\|\Id-VV^T\|_F^2 = (r+1)(n-r).
\end{equation}
Moreover, we have
\begin{align}
    \| \widetilde \Pi_I \|_F^2 & = \|\Id - E_I(V^TE_I)^{-1}V^T\|_F^2 = \|\Id - E_I E_I^T\|_F^2 + \|E_I(E_I^T-(V^TE_I)^{-1}V^T)\|_F^2\nonumber\\
    & = n-r + \|E_I^T - (V^T E_I)^{-1}V^T\|_F^2 = n-2r+\|(V^TE_I)^{-1}\|_F^2, \label{eq:boundpiJ}
\end{align}
where the last equality follows from the fact that $(V^T E_I)^{-1}V^T$ has an identity block in the columns corresponding to $I$ and subtracting $E_I^T$ annihilates these columns; therefore $\|(V^T E_I)^{-1} V^T\|_F^2 = r + \|E_I^T - (V^TE_I)^{-1}V^T\|_F^2$. Combining~\eqref{eq:id} with~\eqref{eq:boundpiJ} we get $\mathbb{E}[\| (E_I^T V)^{-1}\|_F^2] = r(n-r+1)$.
Finally, we have
\begin{align*}
    \|(E_I^TV)^{-1}\|_2^2& = \|(E_I^TV)^{-1}\|_F^2 - \sum_{k=2}^r \sigma_{r}((E_I^TV)^{-1})^{2} \le \|(E_I^TV)^{-1}\|_F^2-(r-1)
\end{align*}
since all the singular values of $E_I^TV$ are upper bounded by the singular values of $V$, which are equal to $1$. Taking expectations, $\mathbb{E}[\|(E_I^TV)^{-1}\|_2^2] \le \mathbb{E}[ \|(E_I^TV)^{-1}\|_F^2]-r+1 = 1+r(n-r)$.
\end{proof}

Let us stress that Algorithm~\ref{alg:ARP} is oblivious to $f$, that is, it does not need to access the usually unknown vector $f$ in order to satisfy the favorable bound~\eqref{eq:deimbound}. In contrast, Osinsky's algorithm (Algorithm~\ref{alg:Osinsky}) requires access to $f$ in order to achieve the bound~\eqref{eq:deimbound} deterministically. As suggested in~\cite{Osinsky2023},
Osinsky's algorithm can be applied to the identity matrix instead of $f$ for determining $I$, but this increases the pre-factor from $r+1$ to $r(n-r)+1$.

\section{CUR and cross approximation} \label{sec:crossapproximation}

As highlighted in the introduction, CSSP is equivalent to finding a rank-$r$ approximation of $A$ such that the left rank-$r$ factor consists of $r$ selected columns of $A$.
The general goal of CUR and related concepts is to construct rank-$r$ approximations for which also the right factor is constructed from $r$ selected rows of $A$. Often, a CUR
approximation is understood in the more specific form
\begin{equation}\label{eq:cur}
    A \approx A(:,J) A(:,J)^\dagger A A(I,:)^\dagger A(I,:),
\end{equation}
where $I$ and $J$ are (well chosen) row and column index sets of cardinality $r$.
A simple trick, many times used in the literature (see~\cite{DongMartinsson2023,Mahoney2009,Sorensen2016} for examples), can be used to relate CSSP and DEIM to CUR. Applied in our setting, this
trick amounts to using Algorithm~\ref{alg:ARP} with a matrix $V$ defining a row space approximation~\eqref{eq:rowspaceapprox} to select $J$ and again Algorithm~\ref{alg:ARP} with a matrix $U$ defining a column space approximation of $A$ (i.e. a row space approximation~\eqref{eq:rowspaceapprox} of $A^T$) to select $I$. Pythagoras' theorem gives
\begin{equation*}
    \|A - A(:,J)A(:,J)^\dagger A A(I,:)^\dagger A(I,:)\|_F^2 = \|A-\Pi_J A\|_F^2 + \|\Pi_J (A - A A(I,:)^\dagger A(I,:))\|_F^2,
\end{equation*}
{where we recall that $\Pi_J$ denotes the orthogonal projector onto the column space of $A(:, J)$. Now, }
Theorem~\ref{thm:mainthm} implies that
\begin{equation*}
    \mathbb{E}[\|A - A(:,J)A(:,J)^\dagger A A(I,:)^\dagger A(I,:)\|_F^2] \le (r+1)\left (\|A(\Id-VV^T)\|_F^2 + \|(\Id-UU^T)A\|_F^2\right ).
\end{equation*}
In contrast, the (non-adaptive) subspace sampling method from~\cite{Mahoney2009} uses the same input but needs to sample $\mathcal O(\epsilon^{-2} r \log r)$ columns and rows in order to achieve a relative Frobenius norm error $2+\epsilon$ whp.

A major disadvantage of CUR~\eqref{eq:cur} is that one needs to sample the full matrix, or at least needs to be able to compute matrix-vector products with $A$, in order to build the middle matrix $A(:,J)^\dagger A A(I,:)^\dagger$. The cross approximation, also called skeleton approximation, avoids this by choosing a different middle matrix:
\begin{equation}
 A \approx A(:,J)A(I,J)^{-1}A(I,:). \label{eq:crossapproximation2}
\end{equation}
This approximation exists whenever one selects index sets $I$ and $J$ of cardinality $r$ such that $A(I,J)$ is invertible. Also, note that $A$ is matched exactly (interpolated) in the selected rows and columns. To build the approximation~\eqref{eq:crossapproximation2}, only these rows and columns of $A$ need to be evaluated. {In particular, unlike for CUR, no additional matrix-vector products are needed.} Combined with a cheap procedure for sampling $I$ and $J$, this gives, in principle, the possibility to obtain an approximation with a complexity $\mathcal O(r (m+n))$, which is sublinear in the size of $A$. For example, under a rather restrictive incoherence assumption on the singular vectors, uniform sampling can be safely used~\cite{ChiuDemanet2013}.

In the spirit of~\cite[Theorem 2]{Osinsky2023} and~\cite[Algorithm 1]{DongMartinsson2023}, we sample the index sets $I$ and $J$ defining~\eqref{eq:crossapproximation2} as follows.
Once a matrix $V$ corresponding to a row space approximation~\eqref{eq:rowspaceapprox} is known and column indices $J$ have been computed with Algorithm~\ref{alg:ARP}, we obtain an orthonormal basis $Q_J$ of $A(:,J)$ as a row space approximation~\eqref{eq:rowspaceapprox} of $A^T$. We apply Algorithm~\ref{alg:ARP} once more, now to $Q_J$, to obtain an index for selecting columns of $A^T$, that is, rows of $A$. The resulting procedure is summarized in Algorithm~\ref{alg:CA}. 

\begin{algorithm}
\textbf{Input: } Matrix $A\in \R^{m\times n}$ and an orthonormal basis $V \in \R^{n \times r}$ defining a row space approximation~\eqref{eq:rowspaceapprox}

\textbf{Output: }Index sets $I = (i_1, \ldots, i_r)$ and $J=(j_1, \ldots, j_r)$  defining a cross approximation~\eqref{eq:crossapproximation2}

    \begin{algorithmic}[1] 
    \STATE{Obtain index set $J$ by applying Algorithm~\ref{alg:ARP} to $V$}
    \STATE{Compute an orthonormal basis $Q_J$ of $A(:,J)$ by a QR decomposition}
    \STATE{Obtain index set  $I$ by applying Algorithm~\ref{alg:ARP} to $Q_J$}
    \end{algorithmic}
    \caption{Adaptive Randomized Pivoting for cross approximation (ARPcross)}
    \label{alg:CA}
\end{algorithm}

Using Theorem~\ref{thm:mainthm} we can establish a bound on the quality of the cross approximation returned by Algorithm~\ref{alg:CA}.
\begin{theorem}\label{thm:CA}
    Let $A \in \R^{m \times n}$ and let $V \in \R^{n \times r}$ be an orthonormal basis. Then the random index sets $I$ and $J$ returned by Algorithm~\ref{alg:CA} satisfy
    \begin{equation*}
        \mathbb{E}\left [\|A - A(:,J) A(I,J)^{-1}A(I,:)\|_F^2\right ] \le (r+1)^2 \|A - AVV^T\|_F^2.
    \end{equation*}
\end{theorem}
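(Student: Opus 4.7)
The plan is to reduce the cross approximation error to two successive applications of Theorem~\ref{thm:mainthm}, exploiting that Algorithm~\ref{alg:CA} feeds the output of the first sampling stage into the second through an orthonormal basis of $A(:,J)$.

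First I would rewrite the cross approximation in a form that matches the approximation analyzed in Theorem~\ref{thm:mainthm}. If $A(:,J) = Q_J R$ is a thin QR decomposition with $Q_J \in \R^{m \times r}$ and invertible $R \in \R^{r \times r}$, then $A(I,J) = Q_J(I,:)\, R$ and, after cancellation,
\begin{equation*}
  A(:,J)\, A(I,J)^{-1}\, A(I,:) \;=\; Q_J\, Q_J(I,:)^{-1}\, A(I,:).
\end{equation*}
This is precisely the approximation produced by applying Algorithm~\ref{alg:ARP} to $A^T$ with the row-space approximation $Q_J$, since the right-hand side is $\bigl(A^T(:,I)\, Q_J(I,:)^{-T}\, Q_J^T\bigr)^T$.

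Second I would condition on $J$ and apply Theorem~\ref{thm:mainthm} to the matrix $A^T$ together with the orthonormal basis $Q_J$. Since $Q_J$ is an orthonormal basis of the column space of $A(:,J)$, we have $A^T Q_J Q_J^T = (\Pi_J A)^T$ and therefore
\begin{equation*}
  \mathbb{E}\!\left[\bigl\|A - A(:,J)\, A(I,J)^{-1}\, A(I,:)\bigr\|_F^2 \,\bigm|\, J\right]
  \;=\; (r+1)\,\|A - \Pi_J A\|_F^2.
\end{equation*}
Here I use that $Q_J$ depends only on $J$ (and on $A$), so after conditioning the only randomness in the second stage comes from the internal coin flips of Algorithm~\ref{alg:ARP} applied to $Q_J$, exactly as required by the theorem.

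Third I would take expectation over $J$ and use the CSSP bound. Because $\Pi_J$ is the orthogonal projector onto the column space of $A(:,J)$, we have $\|A - \Pi_J A\|_F^2 \le \|A - A(:,J)V(J,:)^{-T}V^T\|_F^2$, and Theorem~\ref{thm:mainthm} applied to $A$ with row-space approximation $V$ gives $\mathbb{E}\,\|A-\Pi_J A\|_F^2 \le (r+1)\|A-AVV^T\|_F^2$. Combining this with the conditional identity of the previous step via the tower property yields the desired $(r+1)^2 \|A-AVV^T\|_F^2$ bound.

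I do not expect any real obstacle: the only subtle point is that $Q_J$ is itself random through $J$, but this is handled cleanly by conditioning, since $Q_J$ is measurable with respect to $J$ and the two sampling passes in Algorithm~\ref{alg:CA} use independent randomness.
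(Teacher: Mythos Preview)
Your proposal is correct and follows essentially the same route as the paper: rewrite the cross approximation via an orthonormal basis $Q_J$ of $A(:,J)$, apply Theorem~\ref{thm:mainthm} to $A^T$ with $Q_J$ conditionally on $J$, bound the resulting orthogonal-projection error by the oblique one, and apply Theorem~\ref{thm:mainthm} once more. The only cosmetic difference is that you obtain the identity $A(:,J)A(I,J)^{-1}A(I,:)=Q_J\,Q_J(I,:)^{-1}A(I,:)$ by direct QR cancellation, whereas the paper phrases the same step as an instance of Lemma~\ref{lemma:remarkableprops}.
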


\begin{proof}
From the first property of Lemma~\ref{lemma:remarkableprops2}, it follows that
\begin{align*}
    A - A(:,J)A(I,J)^{-1} A(I,:) & = (\Id - A(:,J) (E_I^T A(:,J))^{-1} E_I^T ) A
    = (\Id - Q_J (E_I^T Q_J)^{-1} E_I^T)A,
\end{align*}
where $Q_J$ is an orthonormal basis of $A(:,J)$. 
Using the law of total expectation, we get
\begin{align*}
    \mathbb{E}[\|A - A(:,J)A(I,J)^{-1}A(I,:)\|_F^2] &=
      \mathbb{E}_J\left [\mathbb{E}_I[ \|(\Id - Q_J (E_I^T Q_J)^{-1} E_I^T) A\|_F^2 \mid J ]\right ] \\
     &= \mathbb{E}_J [(r+1) \|(\Id-Q_J Q_J^T)A\|_F^2] \\
     &\le (r+1)\mathbb{E}_J[\|A - A(:,J)V(J,:)^{-T}V^T\|_F^2] \\ &= (r+1)^2 \|A- AVV^T\|_F^2,
\end{align*}
As in the proof of Corollary~\ref{cor:deim}, the 
second equality follows from applying Theorem~\ref{thm:mainthm} to $A^T$ (with its row space approximation $Q_J$), the inequality follows from the minimality of orthogonal projections, and the last equality  follows from again applying Theorem~\ref{thm:mainthm}, now to the matrix $A$.
\end{proof}

Algorithm~\ref{alg:CA} can be derandomized in a similar way to Algorithm~\ref{alg:ARP} and this recovers~\cite[Theorem 2]{Osinsky2023}. However, as for Algorithm~\ref{alg:Osinsky}, this derandomized algorithm comes with the disadvantage that the full matrix $A$ needs to {be} accessed repeatedly. In contrast, Algorithm~\ref{alg:CA} only needs to access $r$ rows and columns of $A$. It enjoys sublinear complexity $\mathcal O(r^{{2}} (m+n))$, \emph{once} a row space approximation $V$ is
available.

\section{Nystr\"om approximation for SPSD matrices} \label{sec:spsd}
For an SPSD matrix $A\in \R^{n \times n}$, it is sensible to choose $I = J$ in the cross approximation~\eqref{eq:crossapproximation2}, which gives 
rise to the Nystr\"om approximation~\cite{Williams2000}:
\begin{equation}\label{eq:symmetricCA}
    A \approx A(:,J) A(J,J)^{-1} A(J,:).
\end{equation}
The \emph{Gram correspondence}~\cite{EpperlyBlog} establishes a one-to-one relation between~\eqref{eq:symmetricCA} and CSSP. To see this, let $J$ be an arbitrary index set of cardinality $r$ such that $A(J,J)$ is invertible. Then for any matrix $B$ such that $B^TB=A$ we have
\begin{equation*}
    A(:,J)A(J,J)^{-1}A(J,:) = B^T B(:,J)(B(:,J)^TB(:,J))^{-1}B(:,J)^T B.
\end{equation*}
Because $\Pi := B(:,J)\left (B(:,J)^TB(:,J)\right )^{-1}B(:,J)^T$ is an orthogonal projection (onto the column space of $B(:,J)$), it holds that 
$(\Id-\Pi) = (\Id-\Pi)^2$ and, hence,
\begin{equation}\label{eq:Gram}
        \|A - A(:,J)A(J,J)^{-1}A(J,:)\|_* = \|B^T(\Id-\Pi)^2B\|_*  = \|(\Id-\Pi)B\|_F^2.
    \end{equation}
    Here, $\| \cdot \|_*$ denotes the nuclear norm of a matrix (i.e., the sum of its singular values), which coincides with the trace for an SPSD matrix. 
    The relation~\eqref{eq:Gram} allows us to turn results on CSSP for $B$ into results on Nystr\"om~\eqref{eq:symmetricCA} for $A$. For example, the result~\eqref{eq:bestapproximation} implies the existence of an index set $J$ such that 
\begin{equation}\label{eq:SPSD}
    \|A - A(:,J) A(J,J)^{-1} A(J,:)\|_* \le (r+1)(\sigma_{r+1}(A) + \ldots + \sigma_n(A));
\end{equation}
see also~\cite[Theorem 1]{Massei2022}. {Let us emphasize that the factor $(r+1)$ in~\eqref{eq:SPSD} is tight. Indeed, Proposition 3.3 in~\cite{Deshpande2006} states the existence, for any $\epsilon > 0$, of an $(r+1)\times (r+1)$ matrix $B$ such that, for any index set $J$ of cardinality $r$, $\|B - \Pi_J B\|_F^2 \ge (1-\epsilon)(r+1)\sigma_{r+1}^2(B)$. This allows us to construct, for any $\epsilon > 0$, a corresponding matrix $A := B^T B$, for which the relation~\eqref{eq:Gram} implies
\begin{equation*}
\|A - A(:,J)A(J,J)^{-1}A(J,:)\|_* \ge (1-\epsilon)(r+1)\sigma_{r+1}^2(B) =  (1-\epsilon)(r+1)\sigma_{r+1}(A).
\end{equation*}}

\subsection{Randomized algorithm} 

Our randomized Nystr\"om approximation simply applies 
Algorithm~\ref{alg:ARP} to a row space approximation $V$ of $A$
to get an index set $J$ defining~\eqref{eq:symmetricCA}.
The relation~\eqref{eq:Gram} allows us to leverage Theorem~\ref{thm:mainthm} for obtaining an error bound. 

\begin{corollary}\label{cor:SPSD}
    Let $A \in \R^{n \times n}$ be SPSD and let $V \in \R^{n \times r}$ be an orthonormal basis. Then the random index set $J$ returned by Algorithm~\ref{alg:ARP} satisfies
    \begin{equation*}
        \mathbb{E}[\|A - A(:,J)A(J,J)^{-1}A(J,:)\|_*] \le (r+1) \|(\Id-VV^T)A(\Id-VV^T)\|_*.
    \end{equation*}
\end{corollary}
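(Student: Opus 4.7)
The plan is to chain together the Gram correspondence~\eqref{eq:Gram} with Theorem~\ref{thm:mainthm} applied to a square-root factor of $A$. Specifically, I will let $B \in \R^{n \times n}$ be any matrix with $B^T B = A$ (for example, $B = A^{1/2}$), so that $B(:,J)^T B(:,J) = A(J,J)$ and, crucially, the \emph{same} random index set $J$ drawn by Algorithm~\ref{alg:ARP} from $V$ can be regarded as the output of ARP applied to $B$ with the orthonormal basis $V$, since the sampling in Algorithm~\ref{alg:ARP} depends only on $V$ and not on the data matrix.

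First I would invoke~\eqref{eq:Gram} to rewrite
\[
\|A - A(:,J)A(J,J)^{-1}A(J,:)\|_* = \|(\Id - \Pi)B\|_F^2,
\]
where $\Pi$ is the orthogonal projector onto the column space of $B(:,J)$. Since orthogonal projection minimizes the residual over all rank-$r$ approximations whose column space sits inside $\mathrm{range}(B(:,J))$, we have the pointwise bound
\[
\|(\Id - \Pi)B\|_F^2 \;\le\; \|B - B(:,J)V(J,:)^{-T}V^T\|_F^2,
\]
using that $V(J,:)$ is invertible almost surely, as established in Section~\ref{sec:ARP}.

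Next I would take expectations and apply Theorem~\ref{thm:mainthm} directly to $B$ (with the same $V$) to obtain
\[
\mathbb{E}\bigl[\|B - B(:,J)V(J,:)^{-T}V^T\|_F^2\bigr] = (r+1)\,\|B - BVV^T\|_F^2.
\]
Finally, I would convert the right-hand side back to a quantity depending only on $A$ and $V$ via the trace identity
\[
\|B - BVV^T\|_F^2 = \operatorname{tr}\bigl((\Id - VV^T)B^TB(\Id - VV^T)\bigr) = \operatorname{tr}\bigl((\Id - VV^T)A(\Id - VV^T)\bigr),
\]
and observe that $(\Id - VV^T)A(\Id - VV^T)$ is SPSD, so its trace equals its nuclear norm. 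Concatenating these steps yields the stated bound.

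The main subtlety, rather than a serious obstacle, is the degenerate case where $A$ is rank-deficient and $A(J,J)$ could fail to be invertible: in that case one interprets $A(J,J)^{-1}$ as a pseudoinverse and notes that~\eqref{eq:Gram} still holds with $\Pi$ defined as the orthogonal projector onto $\mathrm{range}(B(:,J))$; equivalently, one can perturb $A$ by $\varepsilon \Id$ and take $\varepsilon \to 0$ at the end. Everything else is routine manipulation of the existing identities.
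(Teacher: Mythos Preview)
Your proposal is correct and follows essentially the same route as the paper: factor $A=B^TB$, invoke the Gram correspondence~\eqref{eq:Gram}, bound the orthogonal projection error by the oblique one, apply Theorem~\ref{thm:mainthm} to $B$, and rewrite $\|B-BVV^T\|_F^2$ as $\|(\Id-VV^T)A(\Id-VV^T)\|_*$. Your added remarks that ARP depends only on $V$ (so the same $J$ applies to $B$) and the note on the rank-deficient case are useful clarifications but do not change the argument.
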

\begin{proof}
As above, let $B \in \R^{n \times n}$ be such that $B^TB = A$. By taking expectation on both sides of~\eqref{eq:Gram}, we obtain
    \begin{align*}
        & \mathbb{E}[\|A - A(:,J)A(J,J)^{-1}A(J,:)\|_*]  = \mathbb{E}[\|B - B(:,J)(B(:,J)^TB(:,J))^{-1}B(:,J)^T B\|_*]\\
        & \le \mathbb{E}[\| B - B(:,J)V(J,:)^{-T}V^T\|_F^2]= (r+1)\|B - BVV^T\|_F^2\\
        & = (r+1)\|(\Id-VV^T)B^TB(\Id-VV^T)\|_* = (r+1)\|(\Id-VV^T)A(\Id-VV^T)\|_*,
    \end{align*}
    where we used Theorem~\ref{thm:mainthm} for the second equality.
\end{proof}

For $V = \Vopt$, we have $\|(\Id-VV^T)A(\Id-VV^T)\|_* = \sigma_{r+1}(A) + \ldots + \sigma_n(A)$ and, in this case, the bound of Corollary~\ref{cor:SPSD} matches~\eqref{eq:SPSD} in expectation.

The recently proposed randomly pivoted Cholesky algorithm~\cite{Chen2023} determines the index set by adaptive sampling according to the diagonal of the updated matrix $A$. It comes with the major advantage that only the diagonal and the selected rows/columns of the (original) matrix $A$ need to be accessed. In particular, there is no need for providing a row space approximation $V$. On the other hand, Theorem 5.1 in~\cite{Chen2023} requires oversampling to guarantee the error bound~\eqref{eq:SPSD} (in expectation), especially if the best rank-$r$ approximation error is small. In practice, however, such oversampling does not seem to be needed.

\subsection{Derandomization} \label{sec:SPSDderandomized}

While the definition~\eqref{eq:crossapproximation2} of cross approximation coincides with Nystr\"om for $I = J$, neither the 
randomized Algorithm~\ref{alg:CA} for cross approximation nor its derandomized version from~\cite{Osinsky2023} are guaranteed to preserve this relation for SPSD $A$. Once again, the Gram correspondence is useful in order to derive a suitable derandomized version of the algorithm from the previous section.

The basic recipe for the derandomized algorithm is simple. For any matrix $B$ such that $B^TB = A$, apply 
Algorithm~\ref{alg:Osinsky} to determine an index set $J$ such that
\begin{equation*}
    \|B - B(:,J)V(J,:)^{-T}V^T\|_F^2 \le (r+1) \|B - B VV^T\|_F^2,
\end{equation*}
By the proof of Corollary~\ref{cor:SPSD}, it follows that
this index set $J$ also satisfies
\begin{equation*} 
\|A - A(:,J)A(J,J)^{-1}A(J,:)\|_* \le (r+1) \|(\Id-VV^T)A(\Id-VV^T)\|_*.
\end{equation*}
This recipe is not yet practical because it requires the potentially expensive computation of a square root or Cholesky factor $B$ of $A$. 

Remarkably, it is possible to devise an algorithm for $A = B^T B$ that is mathematically equivalent to applying Algorithm~\ref{alg:Osinsky} to $B$, but does not require the computation of $B$.
To see this, let $\widetilde B_0 = B(\Id - VV^T)$ and $\widetilde B_k := \widetilde B_{k-1} \widetilde \Pi_k$, in analogy to~\eqref{eq:defwaknew}, and set $\widetilde A_k = \widetilde B_k^T \widetilde B_k$ for $k = 1, \ldots, r$. To choose the index according to~\eqref{eq:jk}, we need to compute the squared column norms of $\widetilde B_k$ in each step. Because 
\begin{equation*}
    \|\widetilde B_k(:,j)\|_2^2 = \widetilde A_k(j,j),
\end{equation*}
it suffices to compute the diagonal of $\widetilde A_k$. For this purpose, we first compute the diagonal entries of the initial matrix $\widetilde A_0 = (\Id-VV^T)A(\Id-VV^T)$ by performing the product $Y := AV$ plus $\mathcal O(nr^2)$ additional operations. In each subsequent step, the 
diagonal entries are updated via the relation 
\begin{equation}\label{eq:updatediagelements}
\widetilde A_k(j,j) = e_j^T \widetilde \Pi_k^T \widetilde A_{k-1} \widetilde \Pi_k e_j  =  \widetilde A_{k-1}(j, j) - \frac{2\widetilde A_{k-1}(j,j_k){W}_k(j,k)}{{W}_k(j_k,k)} + \frac{\widetilde A_{k-1}(j_k,j_k) {W}_k(j,k)^2}{{W}_k(j_k,k)^2},
    \end{equation}
    for $j=1,\ldots,n$, {where $\widetilde \Pi_k$ is as in~\eqref{eq:PiktildeW}. } 
    To perform the update~\eqref{eq:updatediagelements} we need access to the $j_k$th column of $\widetilde A_{k-1}$, which can be computed from the relation
\begin{align}\label{eq:newsampledcolumn}
        \widetilde A_{k-1} e_{j_k} & = \widetilde \Pi_{k-1}^T \widetilde \Pi_{k-2}^T \cdots \widetilde \Pi_1^T (\Id - VV^T)A(\Id-VV^T) \widetilde \Pi_1 \cdots \widetilde \Pi_{k-2} \widetilde \Pi_{k-1} e_{j_k}\nonumber\\
        & {= (\Id - V(E_{J_{k-1}}^TV)^\dagger E_{J_{k-1}}^T)(\Id - VV^T)A(\Id - VV^T)(\Id - E_{J_{k-1}}(V^T E_{J_{k-1}})^\dagger V^T)e_{j_k}.}
    \end{align}
    {The relation~\eqref{eq:newsampledcolumn} allows us to compute the $j_k$th column of $\widetilde A_{k-1}$ in $\mathcal O(nr)$ operations: one can keep and update a QR factorization of $V^TE_{J_{k-1}}$ so that applying the oblique projection $E_{J_{k-1}}(V^T E_{J_{k-1}})^\dagger V^T$ -- or its transpose -- to any vector costs at most $\mathcal O(nr)$ operations, and the product $Y = AV$ has already been computed at the beginning.} 
Therefore, also~\eqref{eq:newsampledcolumn} can be evaluated efficiently and without the need of working explicitly with $B$.

The described procedure is summarized in Algorithm~\ref{alg:derandomizedSPSD}. The diagonal entries of $\widetilde A_k$ are denoted by $d_1, \ldots, d_n$ and updated at each step; the vectors $w_k$ are equal to $\widetilde A(:,j_k)$.  The computational cost is $\mathcal O(nr^2)$ plus the cost of forming the matrix $Y = AV$. Therefore, Algorithm~\ref{alg:derandomizedSPSD} is more expensive than Algorithm~\ref{alg:ARP} because it needs the computation of $AV$, but less expensive than Algorithm~\ref{alg:Osinsky} (see Remark~\ref{rmk:costosinsky}) because the computation of the column norms of $\widetilde B_k$ reduce{s} to the computation of the diagonal elements of $\widetilde A_k$, which can be done efficiently. As Algorithm~\ref{alg:derandomizedSPSD} is mathematically equivalent to Algorithm~\ref{alg:Osinsky} applied to $B$, we have the following result.
\begin{algorithm}
\textbf{Input: } SPSD matrix $A \in \R^{n \times n}$ and orthonormal basis $V \in \R^{n \times r}$ defining a row space approximation

\textbf{Output: }Indices $J=(j_1, \ldots, j_r)$  defining Nystr\"om approximation of $A$

    \begin{algorithmic}[1]
    \STATE{Compute $Y \leftarrow AV$}
     \STATE{Compute diagonal elements $d_1, \ldots, d_n$ of $\widetilde A_0 = A-VY^T-YV^T+V(V^TY)V^T$}\label{line:initd}
     \STATE{Initialize {$J_0 \leftarrow ()$} and $V_0 \leftarrow V$}
    \FOR{$k = 1,\ldots,r$}
        \STATE{Choose $j_k \in \arg\min_{j} d_j / \|{W}_{k-1}(j,k:r)\|_2^2$}
        \STATE{{Set $J_k \leftarrow (J_{k-1}, j_k)$}}
        \STATE{Update ${W}_k \leftarrow {W}_{k-1} Q_k$ with Householder reflector $Q_k$ that annihilates ${W}_{k-1}(j_k,k+1:r)$}\label{line:Qk}
        \STATE{Compute $w_k \leftarrow \widetilde A_{k-1}e_{j_k}$ via~\eqref{eq:newsampledcolumn}}%
        \STATE{Update $d_j \leftarrow d_j - 2\frac{w_k(j){W}_k(j,k)}{{W}_k(j_k,k)} + \frac{w_k(j_k) {W}_k(j,k)^2}{{W}_k(j_k,k)^2}$ for $j=1,\ldots,n$}
    \ENDFOR
    \STATE{{Set $J \leftarrow J_r$}}
    \end{algorithmic}
    \caption{Nystr\"om with deterministic adaptive pivoting}
    \label{alg:derandomizedSPSD}
\end{algorithm}
\begin{corollary} \label{cor:detguaranteeSPSD}
    Let $A \in \R^{n \times n}$ be SPSD and let $V \in \R^{n \times r}$ be an orthonormal basis. Then the index set $J$ returned by Algorithm~\ref{alg:derandomizedSPSD} satisfies
    \[
        \|A - A(:,J)A(J,J)^{-1}A(J,:)\|_* \le (r+1) \|(\Id-VV^T)A(\Id-VV^T)\|_*.
    \]
\end{corollary}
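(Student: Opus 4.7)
The plan is to show that Algorithm~\ref{alg:derandomizedSPSD} is mathematically equivalent to applying Algorithm~\ref{alg:Osinsky} to any matrix $B \in \R^{n \times n}$ satisfying $B^T B = A$ with the same orthonormal basis $V$, even though $B$ is never formed. Once this equivalence is established, the deterministic CSSP bound~\eqref{eq:detCSSP} for Algorithm~\ref{alg:Osinsky} applied to $B$ yields
\[
\|B - B(:,J) V(J,:)^{-T} V^T\|_F^2 \le (r+1)\|B - B V V^T\|_F^2,
\]
and the Gram correspondence computation from the proof of Corollary~\ref{cor:SPSD} converts this Frobenius-norm bound for $B$ into the desired nuclear-norm Nystr\"om bound for $A$.

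The key invariant to maintain is that, at the beginning of iteration $k$ of Algorithm~\ref{alg:derandomizedSPSD}, the scalars $d_1,\dots,d_n$ equal the diagonal entries of $\widetilde A_{k-1} = \widetilde B_{k-1}^T \widetilde B_{k-1}$, where $\widetilde B_0 = B(\Id - VV^T)$ and $\widetilde B_k = \widetilde B_{k-1} \widetilde \Pi_k$ is the sequence that Algorithm~\ref{alg:Osinsky} would construct on $B$. Under this invariant, $\|\widetilde B_{k-1}(:,j)\|_2^2 = \widetilde A_{k-1}(j,j) = d_j$, so the selection rule $j_k \in \arg\min_j d_j / \|V_{k-1}(j,k:r)\|_2^2$ agrees with Osinsky's rule~\eqref{eq:jk}. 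Since the Householder update of $V$ in Line~\ref{line:Qk} is identical in both algorithms, the matrices $V_k$ coincide throughout.

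It remains to verify that the updates preserve the invariant. For $k=0$, expanding $\widetilde A_0 = (\Id-VV^T) A (\Id-VV^T) = A - V Y^T - Y V^T + V(V^T Y)V^T$ with $Y = AV$ shows that Line~\ref{line:initd} gives the correct diagonal. For the inductive step, writing $\widetilde A_k = \widetilde \Pi_k^T \widetilde A_{k-1} \widetilde \Pi_k$ with $\widetilde \Pi_k = \Id - e_{j_k} V_k(:,k)^T / V_k(j_k,k)$ and expanding $e_j^T \widetilde A_k e_j$ gives exactly the recursion~\eqref{eq:updatediagelements}, which requires only $d_j$, $V_k$, and the column $w_k := \widetilde A_{k-1} e_{j_k}$. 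The identity~\eqref{eq:newsampledcolumn} expresses $w_k$ as $(\Id-VV^T) A (\Id-VV^T)$ applied to $x := \widetilde \Pi_1 \cdots \widetilde \Pi_{k-1} e_{j_k}$, then followed by the transposed projections; since $x$ is a linear combination of $e_{j_1},\dots,e_{j_k}$, the intermediate product $A x$ only uses the columns $A(:,j_1),\dots,A(:,j_k)$, and the rest of the evaluation uses the precomputed $Y = AV$ plus $\mathcal{O}(nr)$ work.

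The main obstacle is the careful bookkeeping required to verify that the implicit updates on the diagonal of $\widetilde A_k$ truly track what Algorithm~\ref{alg:Osinsky} does to $\widetilde B_k$ without ever materializing $B$. The algebra is elementary but must be checked step by step, since at every iteration the oblique projector $\widetilde \Pi_k$ is determined by quantities extracted from $V_k$ and $\widetilde A_{k-1}$ rather than from $\widetilde B_{k-1}$ directly. Once the equivalence is nailed down, the conclusion is immediate: combining~\eqref{eq:detCSSP} with the chain of equalities $\|B - B(:,J)V(J,:)^{-T}V^T\|_F^2 = \|B - BVV^T\|_F^2 \cdot (\text{factor})$ and $\|(\Id-VV^T)A(\Id-VV^T)\|_* = \|B(\Id-VV^T)\|_F^2$ used in the proof of Corollary~\ref{cor:SPSD} yields precisely the stated bound.
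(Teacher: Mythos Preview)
Your proposal is correct and follows essentially the same approach as the paper: the preceding discussion in Section~\ref{sec:SPSDderandomized} already establishes that Algorithm~\ref{alg:derandomizedSPSD} is mathematically equivalent to running Algorithm~\ref{alg:Osinsky} on any $B$ with $B^TB=A$, and the corollary then follows from~\eqref{eq:detCSSP} together with the Gram correspondence argument in the proof of Corollary~\ref{cor:SPSD}. Your write-up makes the invariant $d_j=\widetilde A_{k-1}(j,j)$ and the inductive verification more explicit than the paper does, but the logic is identical.
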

For $\Vopt = V$, we obtain $\|A - A(:,J)A(J,J)^{-1}A(J,:)\|_* \le (r+1) (\sigma_{r+1}(A) + \ldots + \sigma_n(A))${, which is the best possible guarantee} for a deterministic Nystr\"om approximation. For example, the error bound for the greedy approach, which amounts to Cholesky with diagonal pivoting, features a pre-factor that grows exponentially with $r$~\cite{Harbrecht2012}.

\section{Numerical experiments} \label{sec:experiments}

In this section, we illustrate the behavior of our newly proposed algorithms for CSSP, DEIM, cross approximation, and Nystr\"om approximation on several test matrices. As mentioned in the introduction, the main purpose of the numerical experiments is to validate our methods. The fine-tuning, the development of efficient implementations, and a broad application study of our proposed algorithms are beyond the scope of this work. All algorithms were implemented and executed in MATLAB R2024b on a laptop with an Intel Core Ultra 5 125U $\times$ 14 CPU and 16 GB of RAM. To simplify the setting, we choose the input matrix $V$ for our algorithms to be $\Vopt$, the matrix containing the first right singular vectors, computed with a singular value decomposition of the data. Each randomized algorithm is run for $100$ times and we plot the average error, with the error bars indicating the interval that discards the $10\%$ best and $10\%$ worst errors. {The code to reproduce the numerical experiments is available at \url{https://github.com/Alice94/ARP}.}

\subsection{CSSP for a sparse matrix}\label{sec:CSS_large}

As a first example, we consider a sparse matrix $A$ of size $4,282 \times 8,617$ arising from a linear programming problem sequence, as considered in~\cite{DongMartinsson2023} and available from the SuiteSparse matrix collection~\cite{Davis2011}\footnote{Downloaded from \url{http://sparse.tamu.edu/Meszaros/large}.}. We compare Algorithm~\ref{alg:ARP} (ARP) with leverage score sampling, column norm sampling, column pivoted QR (CPQR), and Osinsky's deterministic algorithm for CSSP (Algorithm~\ref{alg:Osinsky}). {Insisting on selecting \emph{exactly} $r$ columns is not the usual setting for leverage score and column norm sampling, which use oversampling or only attain $r$ columns in expectation. In order to make a meaningful comparison, we make the following adjustment: we start with sampling $r$ columns according to the given probability distribution, then we remove the potential duplicates, update the probabilities and sample again, until we reach the desired cardinality of exactly $r$ distinct columns. This strategy allows us to fairly compare ARP to these sampling strategies. } The results are shown in Figure~\ref{fig:large}. Interestingly, leverage score sampling (which also utilizes the matrix $\Vopt$)  performs relatively badly; all other strategies are very close to the best low-rank approximation in this example.

\subsection{CSSP for genetic data}\label{sec:CSS_DNA}

The second example we consider for CSSP uses the GSE10072 cancer genetics data set from the National Institutes of Health, considered in~\cite[Example 3]{Sorensen2016}\footnote{Downloaded from~\url{https://ftp.ncbi.nlm.nih.gov/geo/series/GSE10nnn/GSE10072/matrix/}.}. The matrix $A$ has size $22,283 \times 107$.  Again, we compare ARP with leverage score sampling, column norm sampling, CPQR, and Algorithm~\ref{alg:Osinsky}. The results are reported in Figure~\ref{fig:DNA}. All methods perform comparably well on this example; Algorithm~\ref{alg:Osinsky} performs slightly better than the other ones, at the expense of a higher computational cost.

\begin{figure}[htb]
\begin{subfigure}[b]{0.48\textwidth}
\centering
    \includegraphics[scale=.53]{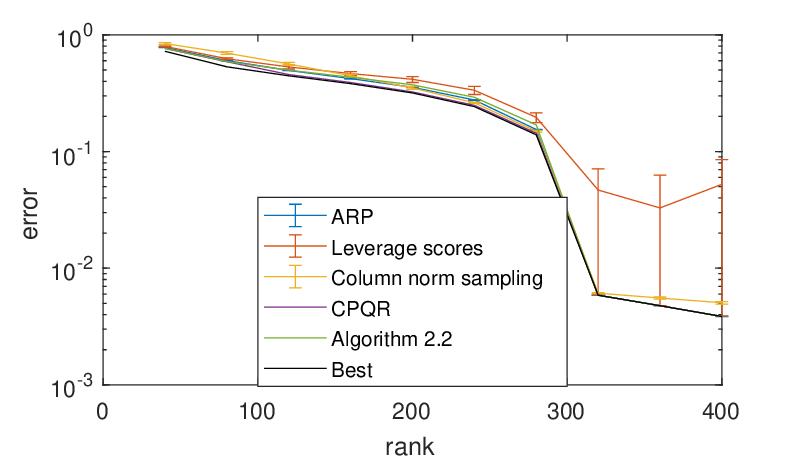}
    \caption{Matrix from Section~\ref{sec:CSS_large}.}
    \label{fig:large}
    \end{subfigure}
    \begin{subfigure}[b]{0.48\textwidth}
\centering
    \includegraphics[scale=.53]{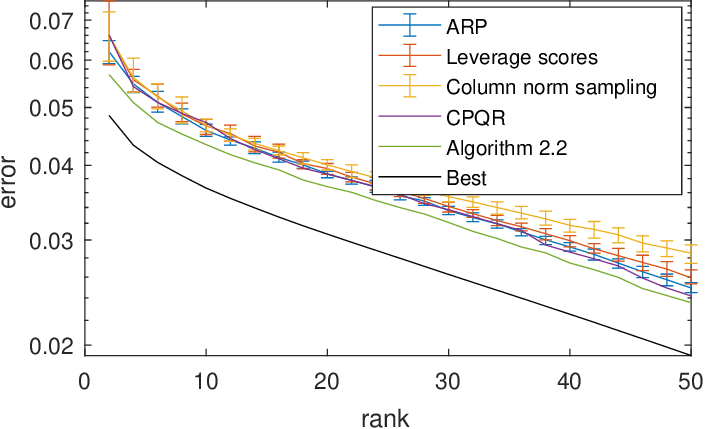}
    \caption{Matrix from Section~\ref{sec:CSS_DNA}.}
    \label{fig:DNA}
\end{subfigure}
\caption{Comparison of different CSSP strategies; the error is $\|A - A(:,J)A(:,J)^\dagger A\|_F / \|A\|_F$.}
\label{fig:example_CSSP}
\end{figure}

\subsection{DEIM}\label{sec:example_deim}
Given values of $x_1,x_2,\mu_1,\mu_2 \in[0,1]$, we define the following function, which was considered in~\cite[Section 2.3]{Peherstorfer2014} and~\cite[Section 6.1]{Saibaba2020}:
\begin{align*}
    f(x_1,x_2,\mu_1,\mu_2) & = g(x_1,x_2,\mu_1,\mu_2) + g(1-x_1,1-x_2,1-\mu_1,1-\mu_2) \\
    & + g(1-x_1,x_2,1-\mu_1,\mu_2) + g(x_1, 1-x_2, \mu_1, 1-\mu_2), 
\end{align*}
where
\begin{equation*}
g(x_1,x_2,\mu_1,\mu_2) = \left ( ((1-x_1)-(0.99\mu_1-1))^2 + ((1-x_2)-(0.99\mu_2-1))^2 + 0.1^2  \right )^{-1/2}.
\end{equation*}
We discretize the function on a $50 \times 50$ grid in $[0,1]^2$ for the spatial variables $x_1,x_2$ and on a $12 \times 12$ grid for the parameters $\mu_1,\mu_2$. We arrange all the function values into 
a $2500 \times 144$ matrix $A$ such that its column and row indices refer to the discretized 
spatial variables and parameters, respectively. To test the quality of index sets $I$ of cardinality $r$, analogously to~\cite{Peherstorfer2014}, we construct vectors $f_1, \ldots, f_{121} \in \R^{2500}$ obtained by discretizing the function on a $50 \times 50$ grid in $[0,1]^2$ for parameters $\mu_1,\mu_2$ which are taken on a $11 \times 11$ equispaced grid in $[0,1]^2$. We then consider the average relative $L^2$-error of the DEIM approximation of these new samples:
\begin{equation}\label{eq:errorDEIM}
\frac{1}{121} \sum_{j=1}^{121} \frac{\|f_j - V V(I,:)^{-1} f_j(I)\|_2}{\|f_j\|_2}.
\end{equation}
We compare the proposed strategy -- ARP -- with uniform sampling, Q-DEIM~\cite{Drmac2016}, and R-DEIM~\cite[Algorithm 6]{Saibaba2020}. R-DEIM first samples $2r$ random columns using leverage score sampling and then subselects $r$ rows using strong rank-revealing QR~\cite{Gu1996}. The results are shown in Figure~\ref{fig:DEIM} (left). While Q-DEIM, which is a greedy deterministic strategy, 
gives the best results in this example, it can give very suboptimal results in certain cases, for instance when applied to a modified Kahan matrix~\cite[Example 1]{Gu1996}. ARP achieves, on average, a slightly larger error compared to R-DEIM, with the advantage that ARP is simpler and does not require oversampling.

\subsection{Cross approximation}\label{sec:example_CA}
Let us consider the nonsymmetric kernel matrix $A \in \R^{2000 \times 2000}$ obtained from
discretizing the kernel function
\begin{equation*}
K(\alpha,\beta) = \exp\left ( -15\sqrt{\alpha^2+\beta^2} \right ) + \exp\left (-75\sqrt{(\alpha-1)^2 + (\beta-1)^2} \right )
\end{equation*}
by choosing $2000$ equispaced samples for $\alpha \in [0,1]$ and $2000$ uniformly random samples for $\beta \in [0,1]$. 

In Figure~\ref{fig:CA}, we compare the column and row selection strategy of Algorithm~\ref{alg:CA} (ARPcross) with a standard implementation of Adaptive Cross Approximation (ACA) with full pivoting (``ACA full'') and ACA with partial pivoting (ACA partial)~\cite{Bebendorf2000}. {We also compare with the Randomized LUPP algorithm from~\cite[Algorithm 2]{DongMartinsson2023} (randLUPP), where the size of the sketch is chosen to be equal to the target rank of the cross approximation.} Given index sets $I$ and $J$, the error is measured in the Frobenius norm as $\|A - A(:,J) A(I,J)^{-1} A(I,:)\|_F / \|A\|_F$. This matrix represents two Gaussian ``bumps'' and is a challenging example for ACA with partial pivoting: One starts with selecting a row and column corresponding to one of the two bumps, and it takes some time until the algorithm detects the other bump. ACA with full pivoting, {randLUPP,} and ARPcross perform comparably well on this example. {The main strength of ARPcross over randLUPP is its strong theoretical guarantee.}

\begin{figure}[htb]
\begin{subfigure}[b]{0.48\textwidth}
\centering
    \includegraphics[scale=.6]{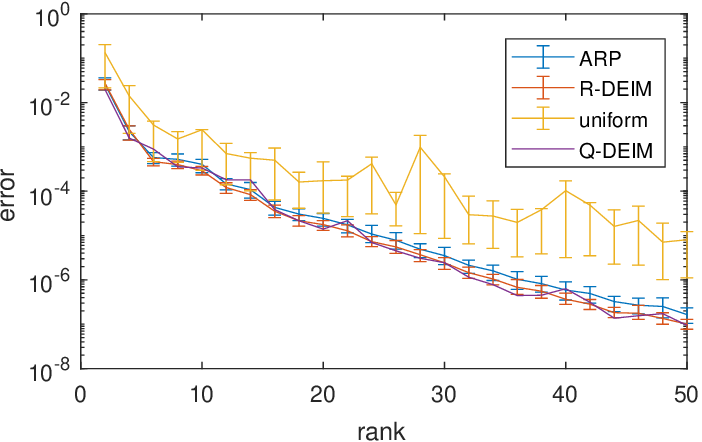}
    \caption{Comparison of strategies to select indices for DEIM for the function described in Section~\ref{sec:example_deim}. The error is measured as in~\eqref{eq:errorDEIM}.}
    \label{fig:DEIM}
    \end{subfigure}
    \begin{subfigure}[b]{0.48\textwidth}
\centering
    \includegraphics[scale=.6]{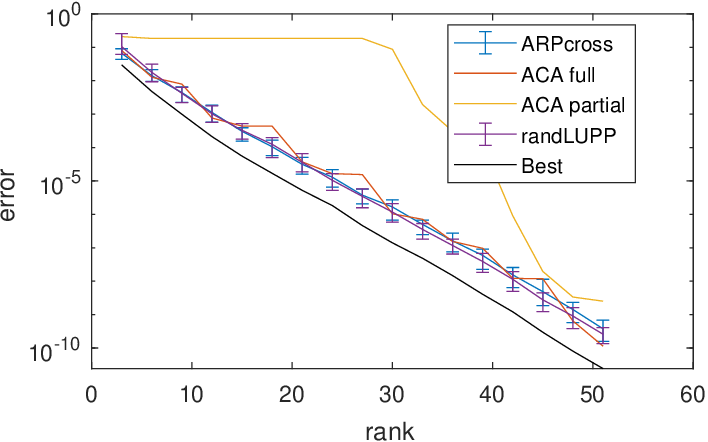}
    \caption{{Comparison of strategies for column and row selection for cross approximation on the matrix from Section~\ref{sec:example_CA}. }}
    \label{fig:CA}
\end{subfigure}
\caption{Numerical results for DEIM (left) and cross approximation (right).}
\label{fig:example_DEIM_CA}
\end{figure}

\subsection{Nystr\"om approximation for SPSD matrices}\label{sec:example_SPSD}
We consider two Gaussian kernel matrices taken from~\cite[Section 2.4]{Chen2023}\footnote{Code adapted from~\url{https://github.com/eepperly/Randomly-Pivoted-Cholesky}}. The first matrix represents a ``smile'' and is  constructed from $1000$ data points depicting a smile in $\R^2$. The kernel bandwidth is set to $2$ and the smile is located in $[-10 , 10] \times [-10 , 10]$. The second matrix represents a ``spiral'' and it is constructed from $1000$ data points in $\R^2$ depicting the logarithmic spiral $(e^{0.2t} \cos t, e^{0.2t} \sin t)$ for non-equispaced parameter values $t\in[0 , 64]$. The kernel bandwidth is $5$.

In Figure~\ref{fig:example_SPSD} we compare the approximation error $\|A - A(:,J)A(J,J)^{-1}A(J,:)\|_* / \|A\|_*$ corresponding to the index set $J$ returned by ARP with the approximation error corresponding to indices selected by RPCholesky~\cite{Chen2023}, leverage score sampling, and uniform sampling. We also compare with {three deterministic strategies: Algorithm~\ref{alg:derandomizedSPSD}, ACA with full pivoting (which in this case reduces to diagonal pivoting), and a greedy algorithm for nuclear norm maximization from~\cite[Algorithm 2]{Fornace2024}.} For the smile, ARP, ACA with full pivoting, and RPCholesky perform similarly, and they are better than both, leverage score and uniform sampling. For the spiral, the slow singular value decay is well-known to create difficulties for greedy methods, such as ACA with full pivoting; the error of ARP is visually below all the other randomized methods for this matrix. In both examples, the derandomized variant, Algorithm~\ref{alg:derandomizedSPSD}, {is usually comparable with greedy nuclear norm maximization, and} usually returns an even better index set compared to ARP, at the cost of a higher computational complexity. 

\begin{figure}[htb]
\begin{subfigure}[b]{0.37\textwidth}
\centering
    \includegraphics[scale=.28]{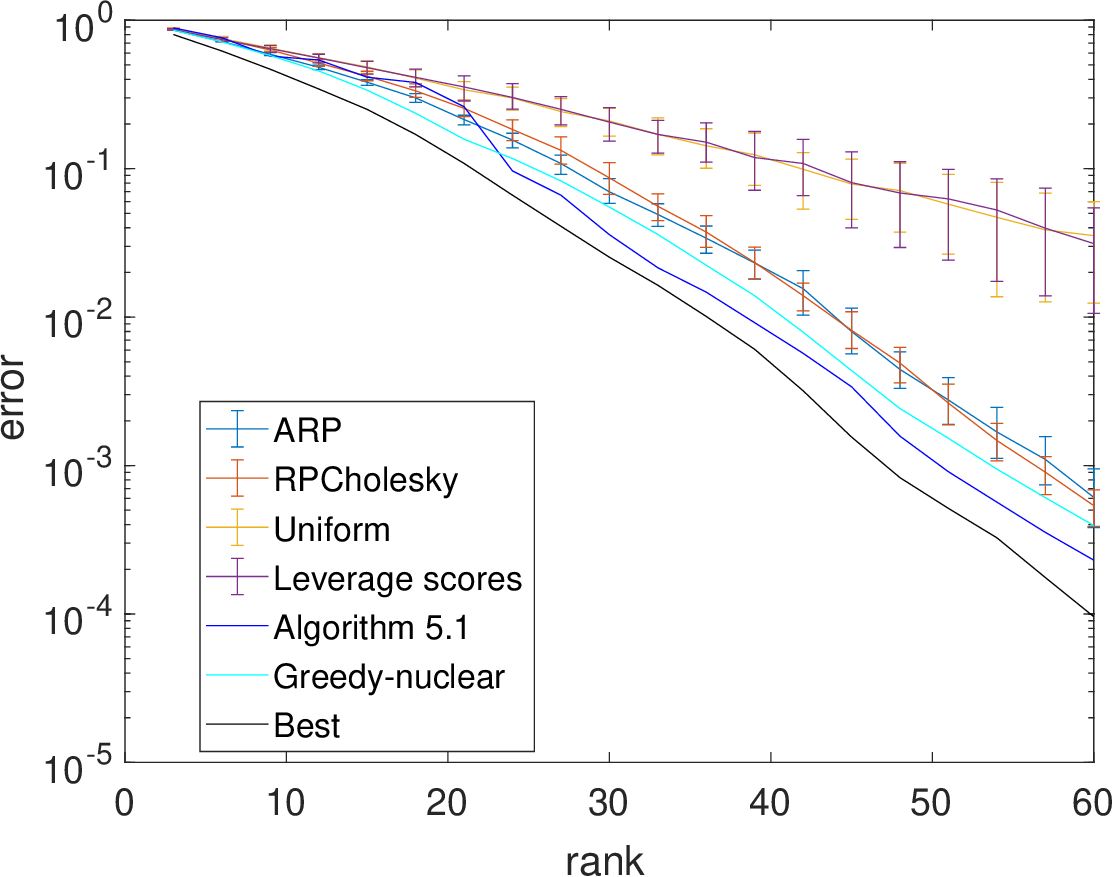}
    \end{subfigure}
    \begin{subfigure}[b]{0.22\textwidth}
    \centering
    \includegraphics[scale=.55]{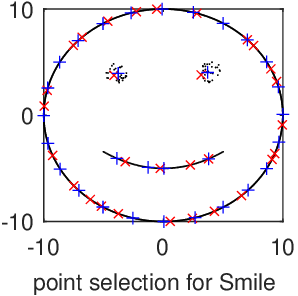}
    \end{subfigure}
    \begin{subfigure}[b]{0.37\textwidth}
\centering
    \includegraphics[scale=.28]{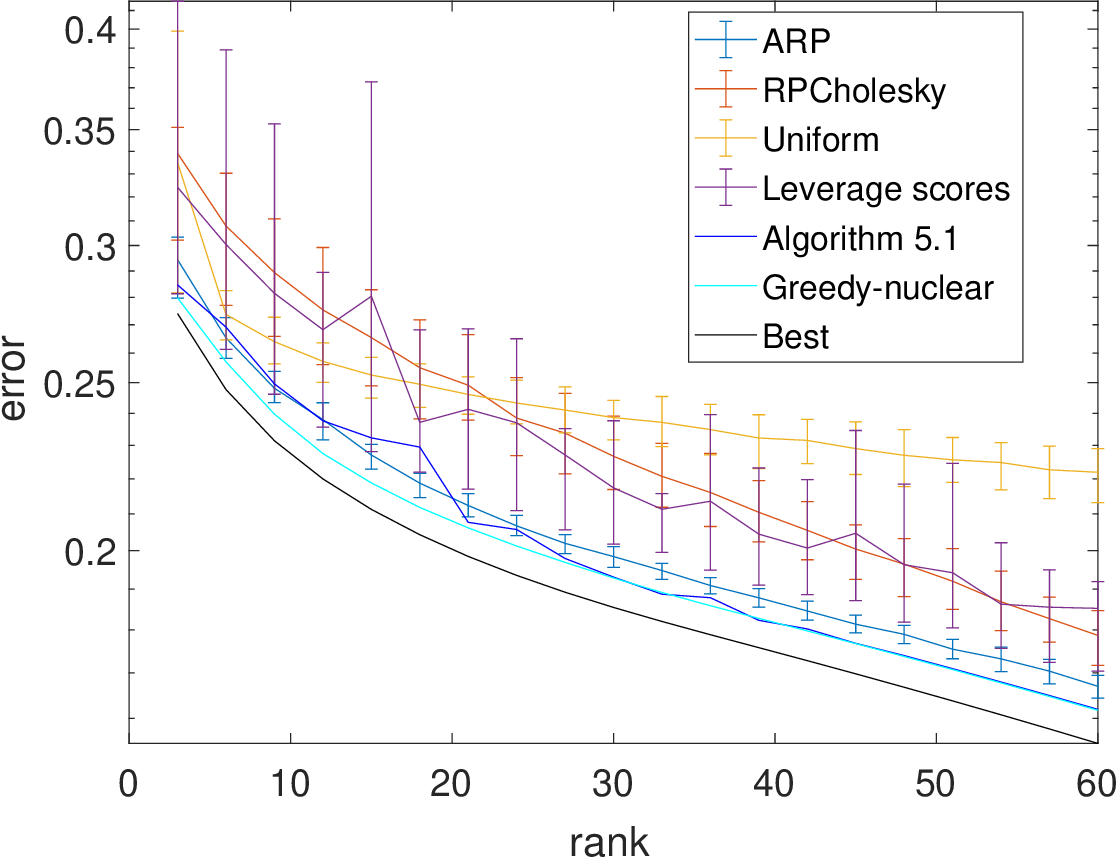}
\end{subfigure}
\caption{{Selection of column indices for SPSD matrices: smile (left) and spiral (right); see Section~\ref{sec:example_SPSD} for details. The figure in the middle illustrates the choice of points made by (one instance of) ARP (\textcolor{red}{red $\times$}) and Algorithm~\ref{alg:derandomizedSPSD} (\textcolor{blue}{blue $+$}) with rank $30$. }}
\label{fig:example_SPSD}
\end{figure}

\section{Conclusions}

As demonstrated by theoretical results and preliminary numerical experiments, our newly proposed Adaptive Randomized Pivoting is a simple yet effective strategy for addressing a range of linear algebra problems that involve index selection. Our strategy requires a priori (row) subspace information, which can be obtained by, e.g., sketching the data once. For column subset selection and cross approximation, having access to such subspace information is arguably unavoidable for any reasonably accurate method, in the absence of further prior knowledge on the data. For these two problems, Adaptive Randomized Pivoting is an attractive alternative to existing deterministic methods, which require additional access to the data beyond a simple sketch, and existing inexpensive randomized methods, which require oversampling to attain equally good theoretical bounds. DEIM is arguably the strongest case for our new strategy; in this case, the required subspace information comes naturally with the application, and we attain better bounds than all existing approaches while remaining oblivious to the data to be approximated. {The close connection to DPP sampling deserves further exploration and can lead to further improvements, such as the acceleration of Algorithm~\ref{alg:ARP} by rejection sampling~\cite{Barthelme2023}.} For the case of Nystr\"om approximation, the previously proposed randomly pivoted Cholesky does not require a priori subspace information, thanks to the positive definiteness assumption on the data. This renders Adaptive Randomized Pivoting unattractive for this problem,  despite offering simpler and possibly tighter error bounds. On the other hand, derandomization leads to a new deterministic method that is significantly better than existing deterministic approaches in terms of error bounds.

\paragraph{{Acknowledgements}} {We thank Anil Damle, Ethan N. Epperly, Joel A. Tropp, and Robert J. Webber for sharing valuable insights on this work. We also thank the referees for providing
helpful feedback.}

\bibliographystyle{siamplain}
\bibliography{references}
\end{document}